\newtheorem{maintheorem}{Theorem}
\newtheorem{theorem}{Theorem}[section]
\newtheorem{lemma}[theorem]{Lemma}
\newtheorem{corollary}[theorem]{Corollary}
\theoremstyle{definition}
\newtheorem{definition}[theorem]{Definition}
\theoremstyle{remark}
\newtheorem{remark}[theorem]{Remark}
\newtheorem{problem}[theorem]{Problem}
\newcommand{\RR}{\mathbb{R}}
\newcommand{\Haus}{\mathcal{H}}
\newcommand{\dist}{\mathop\mathrm{dist}\nolimits}
\newcommand{\diam}{\mathop\mathrm{diam}\nolimits}
\newcommand{\lD}[1]{\mathop{\underline{D}^{#1}}\nolimits}
\newcommand{\uD}[1]{\mathop{\overline{D}^{\,#1}}\nolimits}
\newcommand{\res}{\hbox{ {\vrule height .22cm}{\leaders\hrule\hskip.2cm} }} 
\numberwithin{equation}{section}
\begin{document}

\title{Two sufficient conditions for rectifiable measures}
\thanks{M.~ Badger was partially supported by an NSF postdoctoral fellowship DMS 1203497.
R.~ Schul was partially supported by NSF  DMS 1361473.}
\date{June 29, 2015}
\subjclass[2010]{Primary 28A75}
\keywords{rectifiable measure, singular measure, Jones beta number, Hausdorff density, Hausdorff measure}
\author{Matthew Badger \and Raanan Schul}
\address{Department of Mathematics\\ University of Connecticut\\ Storrs, CT 06269-3009}
\email{matthew.badger@uconn.edu}
\address{Department of Mathematics\\ Stony Brook University\\ Stony Brook, NY 11794-3651}
\email{schul@math.sunysb.edu}

\begin{abstract} We identify two sufficient conditions for locally finite Borel measures on $\RR^n$ to give full mass to a countable family of Lipschitz images of $\RR^m$. The first condition, extending a prior result of Pajot, is a sufficient test in terms of $L^p$ affine approximability for a locally finite Borel measure $\mu$ on $\RR^n$ satisfying the global regularity hypothesis $$\limsup_{r\downarrow 0} \mu(B(x,r))/r^m <\infty\quad \text{at $\mu$-a.e.~$x\in\RR^n$}$$ to be $m$-rectifiable in the sense above. The second condition is an assumption on the growth rate of the 1-density that ensures a locally finite Borel measure $\mu$ on $\RR^n$ with $$\lim_{r\downarrow 0} \mu(B(x,r))/r=\infty\quad\text{at $\mu$-a.e.~$x\in\RR^n$}$$ is 1-rectifiable. \end{abstract}

\maketitle

\section{Introduction}

In the treatise \cite{Federer} on geometric measure theory, Federer supplies the following general notion of rectifiability with respect to a measure. Let $1\leq m\leq n-1$ be integers. Let $\mu$ be a \emph{Borel measure} on $\RR^n$, i.e.~a Borel regular outer measure on $\RR^n$. Then \emph{$\RR^n$ is countably $(\mu,m)$ rectifiable} if there exist countably many Lipschitz maps $f_i:[0,1]^m\rightarrow\RR^n$ such that $\mu$ assigns full measure to the images sets $f_i([0,1]^m)$, i.e. $$\mu\left(\RR^n\setminus\bigcup_{i=1}^\infty f_i([0,1]^m)\right)=0.$$ When $m=1$, each set $\Gamma_i=f_i([0,1])$ is a \emph{rectifiable curve}. Below we shorten Federer's terminology, saying that \emph{$\mu$ is $m$-rectifiable} if $\RR^n$ is countably $(\mu,m)$ rectifiable.

Two well studied subclasses of rectifiable measures are
Hausdorff measures on rectifiable sets and absolutely continuous rectifiable measures.
Given any Borel measure $\mu$ on $\RR^n$ and Borel set $E\subseteq\RR^n$, define the measure $\mu\res E$ (``$\mu$ restricted to $E$") by the rule $\mu\res E(F)=\mu(E\cap F)$ for all Borel sets $F\subseteq\RR^n$. We call a Borel set $E\subseteq\RR^n$ an \emph{$m$-rectifiable set} if $\Haus^m\res E$ is an $m$-rectifiable measure, where $\Haus^m$ denotes the $m$-dimensional Hausdorff measure on $\RR^n$. One may think of an $m$-rectifiable set $E$ as an $m$-rectifiable measure by identifying $E$ with the measure $\Haus^m\res E$. More generally, we say that an $m$-rectifiable measure $\mu$ on $\RR^n$ is \emph{absolutely continuous} if $\mu\ll\Haus^m$, i.e.~ $\mu(E)=0$ whenever $E\subset\RR^n$ and $\Haus^m(E)=0$.

It is a remarkable fact that rectifiable sets and absolutely continuous rectifiable measures can be identified by the asymptotic behavior of the measures on small balls.

\begin{definition}[Hausdorff density] Let $B(x,r)$ denote the closed ball in $\RR^n$ with center $x\in\RR^n$ and radius $r>0$. For each positive integer $m\geq 1$, let $\omega_m=\Haus^m(B^m(0,1))$ denote the volume of the unit ball in $\RR^m$. For all locally finite Borel measures $\mu$ on $\RR^n$, we define the \emph{lower Hausdorff $m$-density} $\lD{m}(\mu,\cdot)$ and  \emph{upper Hausdorff $m$-density} $\uD{m}(\mu,\cdot)$ by \begin{equation*} \lD{m}(\mu,x):=\liminf_{r\rightarrow 0} \frac{\mu(B(x,r))}{\omega_mr^m}\in[0,\infty]\end{equation*} and \begin{equation*} \uD{m}(\mu,x):=\limsup_{r\rightarrow 0} \frac{\mu(B(x,r))}{\omega_mr^m}\in[0,\infty]\end{equation*} for all $x\in\RR^n$.
If $\lD{m}(\mu,x)=\uD{m}(\mu,x)$ for some $x\in\RR^n$, then we write $D^m(\mu,x)$ for the common value and call $D^m(\mu,x)$ the \emph{Hausdorff $m$-density of $\mu$ at $x$}.\end{definition}

\begin{theorem}[\cite{Mattila75}]\label{t:M} Let $1\leq m\leq n-1$. Suppose $E\subset\RR^n$ is Borel and $\mu=\Haus^m\res E$ is locally finite. Then $\mu$ is $m$-rectifiable if and only if the Hausdorff $m$-density of $\mu$ exists and $D^m(\mu,x)=1$ at $\mu$-a.e.~ $x\in\RR^n$.\end{theorem}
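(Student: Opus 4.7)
The plan is to prove the two directions separately; only the reverse implication ``$D^m(\mu,\cdot)=1$ $\mu$-a.e.\ $\Rightarrow$ $\mu$ is $m$-rectifiable'' is substantive.

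For the forward direction, each Lipschitz image $f_i([0,1]^m)$ decomposes, modulo a $\Haus^m$-null set, into countably many $C^1$-manifold patches (via Rademacher's theorem together with a Whitney extension of $f_i$ on sets of approximate differentiability). Hence one may write $E = N \cup \bigsqcup_{i\geq 1} E_i$ with $\Haus^m(N)=0$ and $E_i \subseteq M_i$ for a $C^1$ $m$-submanifold $M_i\subseteq\RR^n$. At $\Haus^m$-a.e.\ $x\in E_i$ the following hold: (i) $x$ is a Lebesgue density point of $E_i$ inside $(M_i,\Haus^m\res M_i)$; (ii) $\Haus^m(M_i\cap B(x,r))/(\omega_m r^m)\to 1$ because $M_i$ admits a tangent plane at $x$; and (iii) $\mu(B(x,r)\cap E_i)/\mu(B(x,r))\to 1$ by the Lebesgue--Besicovitch differentiation theorem applied to the Radon measure $\mu$, so the remaining pieces contribute negligibly. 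Combining (i)--(iii) yields $D^m(\mu,x)=1$.

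For the reverse direction I would follow the tangent measure strategy. After an exhaustion argument, restrict to a Borel set $F\subseteq E$ on which $\mu(B(x,r))/(\omega_m r^m)$ is bounded above and below uniformly for $x\in F$ and $0<r\leq r_0$, with $\mu(E\setminus F)$ arbitrarily small. On $F$ the rescalings $\mu_{x,r}:=r^{-m}(T_{x,r})_{\#}\mu$, where $T_{x,r}(y)=(y-x)/r$, are locally uniformly bounded, so weak-$*$ subsequential limits (tangent measures) $\nu$ exist. A standard Preiss-style ``tangent measures to tangent measures are tangent measures'' argument, combined with the density identity $D^m(\mu,x)=1$, shows that at $\mu$-a.e.\ $x$ every tangent measure $\nu$ is $m$-uniform with $\nu(B(y,s))=\omega_m s^m$ for every $y\in\mathrm{supp}\,\nu$ and every $s>0$. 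A Marstrand-type rigidity result then forces $\mathrm{supp}\,\nu$ to be an affine $m$-plane $V$ through the origin and $\nu=\omega_m^{-1}\Haus^m\res V$. Upgrading this flatness of tangents to an approximate $m$-tangent plane of $E$ at $x$ (where the density being exactly $1$, rather than merely positive and finite, is what rules out competing tangent planes), and then invoking Federer's structure theorem \cite{Federer}, delivers $m$-rectifiability.

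The main obstacle is the Marstrand-type rigidity statement: a Radon measure $\nu$ on $\RR^n$ with $\nu(B(y,s))=\omega_m s^m$ for every $y\in\mathrm{supp}\,\nu$ and $s>0$ must be supported on an $m$-plane. For $m=1$ this is elementary via arclength, but for $m\geq 2$ one is forced into the delicate moment computations (integrating polynomial functions of $|z|^2$ against $\nu$ over concentric balls, expanding in $s$, and reading off polynomial identities that constrain the support) at the heart of Marstrand's original argument, and this is where the analytic weight of the theorem is concentrated.
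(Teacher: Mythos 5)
The paper does not prove Theorem~\ref{t:M}; it is quoted as background directly from \cite{Mattila75} with no argument supplied, so there is no internal proof to compare against. Evaluating your sketch on its own merits: the forward direction (rectifiable $\Rightarrow$ density $1$ a.e.) is essentially correct. Decomposing the Lipschitz images into $C^1$ patches modulo an $\Haus^m$-null set, then combining the Lebesgue density point property of $E_i$ inside $M_i$, the classical density-one property of a $C^1$ submanifold, and Besicovitch differentiation for the ambient Radon measure $\mu$ does yield $D^m(\mu,x)=1$ $\mu$-a.e. (One should just be explicit that $\mu\ll\Haus^m$ is what lets you discard the $\Haus^m$-null exceptional set, and that $\mu(B(x,r)\cap E_i)=\Haus^m(E_i\cap B(x,r))$ because $E_i\subseteq E$.)

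The reverse direction, however, contains a genuine gap at precisely the step you flag as the ``main obstacle.'' The Marstrand-type rigidity statement you invoke --- that a Radon measure $\nu$ on $\RR^n$ satisfying $\nu(B(y,s))=\omega_m s^m$ for every $y\in\mathrm{supp}\,\nu$ and every $s>0$ must be supported on an affine $m$-plane --- is \emph{false} once $m\geq 3$. The Kowalski--Preiss example (the light cone $\{x\in\RR^4: x_1^2=x_2^2+x_3^2+x_4^2\}$, equipped with $\Haus^3$) is a $3$-uniform measure in $\RR^4$ with exactly the normalization $\nu(B(y,s))=\omega_3 s^3$ at every point of its support, including the vertex, yet its support is not a plane. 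Uniform measures are flat only for $m\leq 2$ (Preiss) and in the codimension-one case up to the cone alternative (Kowalski--Preiss); for general $m\geq 3$ their classification is still open. This is exactly the difficulty Preiss had to circumvent in \cite{Preiss}: one cannot conclude flatness of a single tangent measure from uniformity, and instead must use a connectedness argument on the cone of tangent measures together with the fact that flat measures are ``separated'' (in a suitable metric) from non-flat uniform measures, plus the observation that uniform measures have flat tangents at infinity. There is also a secondary gap earlier in your sketch: passing from $D^m(\mu,x)=1$ $\mu$-a.e.\ to ``every tangent measure at a.e.~$x$ is $m$-uniform at \emph{every} point of its support'' already requires the full Preiss machinery (locality of tangent measures and the tangent-measures-to-tangent-measures principle), not just a routine compactness argument. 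It is also worth noting that your route is anachronistic relative to the cited theorem: tangent measures were introduced by Preiss in 1987, whereas Mattila's 1975 proof for $\mu=\Haus^m\res E$ proceeds by more classical means (refining Marstrand's density estimates and the Besicovitch projection/approximate-tangent machinery adapted to sets of finite $\Haus^m$-measure), exploiting the a priori structure of $\Haus^m\res E$ rather than tangent-measure rigidity. As written, your reverse implication does not close for $m\geq 3$.
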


\begin{theorem}[\cite{Preiss}]\label{t:P} Let $1\leq m\leq n-1$. If $\mu$ is a locally finite Borel measure on $\RR^n$, then $\mu$ is $m$-rectifiable and $\mu\ll \Haus^m$  if and only if the Hausdorff $m$-density of $\mu$ exists and $0<D^m(\mu,x)<\infty$ at $\mu$-a.e.~ $x\in\RR^n$.\end{theorem}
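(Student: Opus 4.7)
The forward implication follows from Theorem~\ref{t:M} combined with differentiation of measures. If $\mu$ is $m$-rectifiable and $\mu\ll\Haus^m$, then there is an $m$-rectifiable Borel set $E\subseteq\RR^n$ with $\mu(\RR^n\setminus E)=0$, and by Radon--Nikodym we may write $\mu=f\,\Haus^m\res E$ for a nonnegative Borel function $f$ with $f>0$ $\mu$-a.e. Theorem~\ref{t:M} gives $D^m(\Haus^m\res E,x)=1$ at $\Haus^m\res E$-a.e.~$x$, and the Besicovitch differentiation theorem yields $\lim_{r\downarrow 0}\mu(B(x,r))/(\Haus^m\res E)(B(x,r))=f(x)$ at $\mu$-a.e.~$x$. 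Multiplying the two limits gives $D^m(\mu,x)=f(x)\in(0,\infty)$ $\mu$-a.e.

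The converse is the deep content of the theorem and the plan is to run the tangent-measure machinery. For $x\in\RR^n$ and $r>0$ set $T_{x,r}(y)=(y-x)/r$, and let $\mathrm{Tan}(\mu,x)$ denote the collection of nonzero Radon measures $\nu$ obtained as weak-$*$ limits $c_j\,(T_{x,r_j})_{\#}\mu\to\nu$ for some $r_j\downarrow 0$ and constants $c_j>0$. The argument proceeds in three stages. First, the hypothesis $0<D^m(\mu,x)<\infty$ is used to show that at $\mu$-a.e.~$x$ every tangent measure $\nu\in\mathrm{Tan}(\mu,x)$ is \emph{$m$-uniform}, meaning there is a single constant $c_\nu>0$ with $\nu(B(y,s))=c_\nu s^m$ for all $y\in\mathrm{supp}\,\nu$ and all $s>0$; this is a Marstrand-type rescaling argument that exploits the two-sided density bound together with weak-$*$ compactness of normalized blow-ups. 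Second, one classifies $m$-uniform measures on $\RR^n$ and proves that each such measure is \emph{flat}, i.e.~equal to a constant multiple of $\Haus^m\res V$ for some $m$-dimensional affine subspace $V\subseteq\RR^n$. Third, almost-everywhere flatness of tangents is combined with the Mattila--Preiss characterization of rectifiability via flat tangent measures, yielding a countable cover of $\mu$-almost every point by Lipschitz $m$-images; absolute continuity $\mu\ll\Haus^m$ then follows from the finite upper density.

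The principal obstacle is the second stage. The plan for it is to study the Laplace-type generating function $F(\nu,x,s)=\int\exp(-s|y-x|^2)\,d\nu(y)$ and its polynomial moment expansion: for an $m$-uniform measure $\nu$, differentiating the identity $\nu(B(y,s))=c_\nu s^m$ against appropriate radial weights produces rigid algebraic relations on the coefficients of $F(\nu,x,\cdot)$ in $x$, which first force the support of $\nu$ to approach an $m$-plane at infinity and then, via analyticity, propagate flatness back to every finite scale. This rigidity analysis is the geometric heart of the proof and is what makes the theorem genuinely difficult, particularly for $m\geq 3$, where nontrivial uniform measures such as light cones must be ruled out. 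Stages one and three are comparatively soft and rest on Mattila's tangent-measure calculus and a coronization-type construction for assembling rectifiable pieces from flat-tangent data.
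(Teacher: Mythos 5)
The paper does not prove Theorem~\ref{t:P}; it quotes Preiss's theorem from \cite{Preiss} as background, so there is no in-paper proof to compare against. Your forward direction (Radon--Nikodym plus Theorem~\ref{t:M} plus Besicovitch differentiation) is the standard argument and is fine. Your stage one (tangent measures are $m$-uniform at $\mu$-a.e.\ point) and stage three (flat tangent measures imply rectifiability via the Marstrand--Mattila criterion, with $\mu\ll\Haus^m$ following from finite upper density) are also essentially correct summaries.

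The genuine gap is in stage two. You assert that ``one classifies $m$-uniform measures on $\RR^n$ and proves that each such measure is flat,'' and that the moment expansion of the Laplace-type transform, ``via analyticity, propagate[s] flatness back to every finite scale.'' This is false. The Kowalski--Preiss cone $C=\{x\in\RR^4: x_4^2=x_1^2+x_2^2+x_3^2\}$ supports a $3$-uniform measure $\Haus^3\res C$ that is \emph{not} flat, so no classification-to-flatness result exists, and the propagation-by-analyticity step cannot hold for general uniform measures (a cone can be flat at infinity and at the vertex without being flat). You gesture at this by saying light cones ``must be ruled out,'' but they cannot be ruled out as uniform measures --- they genuinely are uniform. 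What Preiss actually proves is different and subtler: (i) every uniform measure has a flat tangent measure at infinity (your moment computation is the right tool here); (ii) the set $\mathrm{Tan}(\mu,x)$ is connected in a scale-invariant sense and is closed under forming tangent measures; and (iii) flat measures form a relatively open and closed subset of uniform measures appearing as tangents, which rests on a quantitative stability estimate that a uniform measure sufficiently close to a flat one is flat. Only the combination of (i)--(iii) via the connectivity argument yields that at $\mu$-a.e.\ point \emph{all} tangent measures are flat. The connectivity machinery, which is the crucial structural input that lets non-flat uniform measures like the Kowalski--Preiss cone coexist in the abstract while being excluded as a.e.\ tangent measures, is entirely missing from your proposal, and without it the argument does not close.
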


\begin{remark} For any locally finite Borel measure $\mu$ on $\RR^n$: \begin{align}\notag \mu\ll\Haus^m\quad&\Longleftrightarrow\quad \uD{m}(\mu,x)<\infty\text{ at $\mu$-a.e.~$x\in\RR^n$; and,}\\ \label{e:rect-ld} \mu\text{ is $m$-rectifiable}\quad&\,\,\Longrightarrow \quad \lD{m}(\mu,x)>0 \text{ at $\mu$-a.e.~$x\in\RR^n$}.\end{align}See \cite[Chapter 6]{Mattila} and \cite[Lemma 2.7]{BS}.
\end{remark}

There are several other characterizations of rectifiable sets and absolutely continuous rectifiable measures (e.g. in terms of projections or tangent measures); see Mattila \cite{Mattila} for a full survey of results through 1993. Further investigations on rectifiable sets and absolutely continuous rectifiable measures include \cite{Pajot96,Pajot97,Leger,Lerman,Tolsa12,CGLT,TT, Tolsa-square,ADT1,Bate-Li,Buet,ADT2,AT,Tolsa-n}.

The first result of this note is an extension of Pajot's theorem on rectifiable sets \cite{Pajot97} to absolutely continuous rectifiable measures. To state these results, we must recall the notion of an $L^p$ beta number from the theory of quantitative rectifiability.

\begin{definition}[$L^p$ beta numbers] \label{def:beta}
Let $1\leq m\leq n-1$ and let $1\leq p<\infty$. For every locally finite Borel measure $\mu$ on $\RR^n$ and bounded Borel set $Q\subset\RR^n$, define  $\beta^{(m)}_p(\mu,Q)$ by
\begin{equation} \label{e:beta-p}
   \beta^{(m)}_p(\mu,Q)^p:=\inf_\ell \int_Q \left(\frac{\dist(x,\ell)}{\diam Q}\right)^p \frac{d\mu(x)}{\mu(Q)}\in[0,1],
\end{equation}
where $\ell$ in the infimum ranges over all $m$-dimensional affine planes in $\RR^n$. If $\mu(Q)=0$, then we interpret (\ref{e:beta-p}) as $\beta^{(m)}_p(\mu,Q)=0$.\end{definition}

\begin{remark}Beta numbers (of sets) were introduced by Jones \cite{Jones-TST} to characterize subsets of rectifiable curves in the plane and are now often called \emph{Jones beta numbers}. The $L^p$ variant in Definition \ref{def:beta} originated in the fundamental work of David and Semmes on uniformly rectifiable sets \cite{DS91,DS93}
with the normalization appearing in \eqref{e:reg-normalization}. The normalization of $\beta^{(m)}_p(\mu,Q)$ presented in Definition \ref{def:beta} is not new; see e.g.~\cite{Lerman}.

When $Q=B(x,r)$, some sources (e.g.~\cite{DS91,DS93,Pajot97}) define $L^p$ beta numbers using the alternate normalization \begin{equation}\label{e:reg-normalization}
\widetilde\beta^{(m)}_p(\mu,B(x,r))^p:=\inf_{\ell} \int_{B(x,r)} \left(\frac{\dist(x,\ell)}{r}\right)^p \frac{d\mu(x)}{r^m}\in[0,\infty),
\end{equation}
where $\ell$ in the infimum again ranges over all $m$-dimensional affine planes in $\RR^n$. However,  $\beta^{(m)}_p(\mu,B(x,r))$ and $\widetilde{\beta}^{(m)}_p(\mu,B(x,r))$ are quantitatively equivalent at locations and scales where $\mu(B(x,r))\sim r^m$. We have freely translated beta numbers in theorem statements quoted from other sources to the convention of Definition \ref{def:beta}, which is better suited for generic locally finite Borel measures. \end{remark}

\begin{theorem}[\cite{Pajot97}] \label{t:pajot} Let $1\leq m\leq n-1$ and let \begin{equation}\label{e:p-range} \left\{\begin{array}{ll}1\leq p<\infty &\text{ if $m=1$ or $m=2$,}\\ 1 \leq p<2m/(m-2) &\text{ if $m\geq 3$.}\end{array}\right.\end{equation} Assume that $K\subset\RR^n$ is compact and $\mu=\Haus^m\res K$ is a finite measure. If $\lD{m}(\mu,x)>0$ at $\mu$-a.e.~$x\in\RR^n$ and \begin{equation}\label{e:pajot-bp}\int_0^1 \beta^{(m)}_p(\mu, B(x,r))^2\,\frac{dr}{r}<\infty\quad\text{at $\mu$-a.e.~$x\in \RR^n$},\end{equation} then $\mu$ is $m$-rectifiable.
\end{theorem}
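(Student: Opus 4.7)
The strategy is to reduce the pointwise hypotheses to a uniform, globally Ahlfors regular setting in which the David--Semmes sufficient condition for uniform rectifiability applies. Since $\mu=\Haus^m\res K$ with $K$ compact and $\mu(K)<\infty$, the classical density comparison gives $\uD{m}(\mu,x)\leq 1$ at $\mu$-a.e.\ $x$. Combining this upper bound with $\lD{m}(\mu,x)>0$ and \eqref{e:pajot-bp}, an Egoroff/exhaustion argument produces an increasing sequence of Borel sets $E_j\subset K$ with $\mu(K\setminus\bigcup_jE_j)=0$ and constants $c_j,C_j,r_j,M_j>0$ so that, for every $x\in E_j$ and every $0<r\leq r_j$,
\begin{equation*}
c_jr^m\leq\mu(B(x,r))\leq C_jr^m
\qquad\text{and}\qquad
\int_0^{r_j}\beta^{(m)}_p(\mu,B(x,r))^2\,\frac{dr}{r}\leq M_j.
\end{equation*}
It suffices to show that each such $E_j$ lies, up to a $\mu$-null set, in a countable union of Lipschitz images of $[0,1]^m$.

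Fix $j$ and write $E=E_j$. The two-sided density bound expresses that $\mu$ is $m$-Ahlfors regular at scales $\leq r_j$ along $E$, and on this range of scales the two normalizations $\beta^{(m)}_p$ and $\widetilde\beta^{(m)}_p$ from \eqref{e:reg-normalization} are comparable. Fubini applied to the uniform pointwise integral bound converts it into a genuine Carleson-measure bound for $\widetilde\beta^{(m)}_p(\mu,B(x,r))^2$ against $d\mu(x)\,dr/r$ on balls centered in $E$ at scales $\leq r_j$. To remove the scale threshold and the localization, one then replaces $\mu\res E$ by a globally $m$-Ahlfors regular auxiliary measure by gluing on, say, a disjoint copy of Lebesgue measure on an affine $m$-plane outside a large ball; the Carleson condition on the beta numbers transfers to the extension.

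At this point, the sufficient condition of David--Semmes \cite{DS91,DS93} applies: for $p$ in the range \eqref{e:p-range}, an $m$-Ahlfors regular measure whose $L^p$ beta numbers satisfy a Carleson packing condition is supported on a uniformly $m$-rectifiable set, and in particular its support is contained in a countable union of Lipschitz images of $[0,1]^m$. Intersecting with $E$ yields the required countable Lipschitz cover.

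The main obstacle is precisely the interface between the local, pointwise hypothesis \eqref{e:pajot-bp} and the globally Ahlfors regular/Carleson framework demanded by David--Semmes: one must cut off or extend $\mu\res E$ carefully so as to preserve both the regularity and the beta estimate, and one must ensure no mass is lost in the process. The restriction on $p$ is inherited verbatim from \cite{DS91,DS93} and reflects the Sobolev exponent $2m/(m-2)$ that governs when $L^p$ control of $\dist(\,\cdot\,,\ell)/\diam Q$ suffices to force bilateral approximation by $m$-planes.
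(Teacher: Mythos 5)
Theorem~\ref{t:pajot} is stated in the paper as a quoted result from Pajot \cite{Pajot97}; the paper supplies no proof of it. The actual content of \S2 is a reduction of Theorem~\ref{t:suff} \emph{to} Theorem~\ref{t:pajot}, which is invoked as a black box. So there is no ``paper's own proof'' of this statement against which to compare your proposal.

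Evaluated on its own terms, your outline (exhaust $K$ by pieces $E_j$ with two-sided density bounds and a uniformly bounded beta square function, pass to a Carleson condition by Fubini, embed in an Ahlfors regular setting, and apply the David--Semmes sufficient condition for uniform rectifiability) is the right general strategy and is consistent with what is known. But the step you defer as ``the main obstacle'' is a genuine gap rather than a technicality. The uniform bounds $c_j r^m \leq \mu(B(x,r)) \leq C_j r^m$ for $x\in E_j$ and $r\leq r_j$ constrain $\mu$, not $\mu\res E_j$: the restricted measure can put arbitrarily little mass in balls centered on $E_j$ because most of $\mu(B(x,r))$ may live on $K\setminus E_j$, so $\mu\res E_j$ is not Ahlfors regular, and gluing a copy of Lebesgue measure on a distant affine $m$-plane does nothing to fill the small-scale holes inside the region of interest. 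One must instead produce an $m$-regular set (or measure) that captures a positive fraction of $E_j$ and whose beta numbers are controlled by those of $\mu$; this requires a genuine filling construction (a Christ--David-type stopping-time scheme) together with a nontrivial comparison of $\beta^{(m)}_p(\mu\res E_j, B(x,r))$ against $\beta^{(m)}_p(\mu, B(x,r))$, which itself needs a density-point argument ensuring $\mu(E_j\cap B(x,r))\gtrsim\mu(B(x,r))$ for most pairs $(x,r)$. As written, your sketch names the correct tools and the correct $p$-range coming from David--Semmes, but does not yet constitute a proof.
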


In \S2, we note the following extension of Pajot's theorem. Also, see Theorem 2.1.

\begin{maintheorem}\label{t:suff} Let $1\leq m\leq n-1$ and let $1\leq p<\infty$ satisfy \eqref{e:p-range}. Assume that $\mu$ is a locally finite Borel measure on $\RR^n$ such that $\mu\ll\Haus^m$. If $\lD{m}(\mu,x)>0$ at $\mu$-a.e.~ $x\in\RR^n$ and \eqref{e:pajot-bp} holds, then $\mu$ is $m$-rectifiable.\end{maintheorem}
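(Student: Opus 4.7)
The plan is to reduce Theorem~\ref{t:suff} to Pajot's Theorem~\ref{t:pajot} by decomposing $\mu$ into pieces that are each comparable to the Hausdorff measure restricted to a compact set. Since $\mu \ll \Haus^m$, one has $\uD{m}(\mu,x) < \infty$ at $\mu$-a.e.~$x$, and combined with the hypothesis $\lD{m}(\mu,x) > 0$ $\mu$-a.e., we may set
$$E_k := \left\{x \in B(0,k) : k^{-1} \leq \lD{m}(\mu,x)\text{ and } \uD{m}(\mu,x) \leq k\right\},$$
so that $\mu\bigl(\RR^n \setminus \bigcup_k E_k\bigr) = 0$. A standard density comparison (see \cite{Mattila}) turns the bounds on $\lD{m}$ and $\uD{m}$ over $E_k$ into the measure inequality
$$C_k^{-1}\, \Haus^m \res E_k \;\leq\; \mu \res E_k \;\leq\; C_k\, \Haus^m \res E_k$$
for some finite $C_k$. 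In particular $\Haus^m(E_k) < \infty$, so $\Haus^m \res E_k$ is a finite (hence Radon) Borel measure.

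By inner regularity of $\Haus^m \res E_k$, write $E_k = N_k \cup \bigcup_j K_{k,j}$ with $\Haus^m(N_k) = 0$ (hence $\mu(N_k) = 0$ by the comparison) and each $K_{k,j}$ compact. Since $m$-rectifiability passes to absolutely continuous subordinate measures, it suffices to prove that $\nu_{k,j} := \Haus^m \res K_{k,j}$ is $m$-rectifiable. Each $\nu_{k,j}$ is a finite measure on a compact set, so Pajot's Theorem~\ref{t:pajot} applies provided that $\lD{m}(\nu_{k,j},x) > 0$ and that \eqref{e:pajot-bp} holds $\nu_{k,j}$-a.e. Besicovitch differentiation applied to the finite measure $\Haus^m \res E_k$ yields, at $\nu_{k,j}$-a.e.~$x$,
$$\lim_{r \downarrow 0} \frac{\Haus^m(K_{k,j} \cap B(x,r))}{\Haus^m(E_k \cap B(x,r))} = 1,$$
from which $\lD{m}(\nu_{k,j},x) = \lD{m}(\Haus^m \res E_k, x) \geq C_k^{-1}\,\lD{m}(\mu,x) > 0$.

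For the beta condition, the display above combined with the comparability of $\mu$ and $\Haus^m \res E_k$ gives, at $\nu_{k,j}$-a.e.~$x$ and all sufficiently small $r$, $\mu(B(x,r)) \leq 2 C_k^2 \,\nu_{k,j}(B(x,r))$. Taking $\ell^\ast$ to be an optimal $m$-plane for $\mu$ on $B(x,r)$ and using $\nu_{k,j} \leq \Haus^m \res E_k \leq C_k\, \mu$ in the integral in Definition~\ref{def:beta}, one obtains
$$\beta^{(m)}_p(\nu_{k,j}, B(x,r))^p \leq \frac{C_k\, \mu(B(x,r))}{\nu_{k,j}(B(x,r))}\, \beta^{(m)}_p(\mu, B(x,r))^p \leq 2C_k^3\, \beta^{(m)}_p(\mu, B(x,r))^p$$
for such $r$. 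Squaring and integrating against $dr/r$ then transfers \eqref{e:pajot-bp} from $\mu$ to $\nu_{k,j}$, and Pajot's theorem yields the $m$-rectifiability of each $\nu_{k,j}$. The most delicate step is the final beta number comparison: it is the inner-regular choice of compact $K_{k,j}$, via the Besicovitch density-point identity, that makes $\nu_{k,j}$ and $\mu$ comparable on small balls around $\nu_{k,j}$-typical points and thereby lets the beta-squared integrability pass to the restriction.
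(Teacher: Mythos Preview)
Your argument is correct and follows the same overall strategy as the paper: decompose $\mu$ into pieces where it is comparable to $\Haus^m$ restricted to a compact set, transfer the lower-density and $\beta$-number hypotheses to that restricted Hausdorff measure via density-point arguments, and then invoke Pajot's Theorem~\ref{t:pajot}.

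The only noteworthy difference is in how the compact pieces are produced. The paper works with
\[
A(j,k)=\Bigl\{x\in B(0,2^k):\ 2^{-j}r^m\le \mu(B(x,r))\le 2^j r^m\ \text{for all }0<r\le 2^{-k}\Bigr\},
\]
whose \emph{closures} are automatically compact and, crucially, satisfy $\overline{A(j,k)}\subset A(j+1,k+1)$, so the uniform two-sided bound on $\mu(B(x,r))/r^m$ survives passage to the closure without any appeal to inner regularity. You instead take sets $E_k$ defined by bounds on $\lD{m}$ and $\uD{m}$, obtain the measure comparability $C_k^{-1}\Haus^m\res E_k\le \mu\res E_k\le C_k\Haus^m\res E_k$ from the standard density comparison theorems, and then extract compact $K_{k,j}$ by inner regularity of the finite measure $\Haus^m\res E_k$. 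Both routes arrive at the same place; the paper's choice buys a slightly more direct comparison (the uniform bound at all scales $\le 2^{-k}$ is available immediately), while yours relies a bit more on Besicovitch/Lebesgue density points---in particular, your inequality $\lD{m}(\Haus^m\res E_k,x)\ge C_k^{-1}\lD{m}(\mu,x)$ implicitly also uses that $\mu(E_k\cap B(x,r))/\mu(B(x,r))\to 1$ at $\mu$-a.e.\ $x\in E_k$, and your $\beta$-comparison holds only for $r\le r_x$, with the tail $\int_{r_x}^1$ controlled trivially since $\beta\le 1$. These are routine points and do not affect the validity of your proof.
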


In a forthcoming paper, Tolsa \cite{Tolsa-n} proves that \eqref{e:pajot-bp} is a necessary condition for an absolutely continuous measure to be rectifiable. Together with Theorem A and  \eqref{e:rect-ld}, this result provides a full characterization of absolutely continuous rectifiable measures in terms of the beta numbers and lower Hausdorff density of a measure.

\begin{theorem}[\cite{Tolsa-n}] \label{t:tolsa} Let $1\leq m\leq n-1$ and let $1\leq p\leq 2$. If $\mu$ is $m$-rectifiable and $\mu\ll\Haus^m$, then \eqref{e:pajot-bp} holds.\end{theorem}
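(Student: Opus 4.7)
By Jensen's inequality applied to the probability measure $d\mu/\mu(Q)$ on $Q$, we have $\beta^{(m)}_p(\mu,Q)\leq \beta^{(m)}_2(\mu,Q)$ for every $1\leq p\leq 2$, so it suffices to prove the theorem for $p=2$. My plan is to reduce to the case where $\mu$ is the restriction of a bounded density times $\Haus^m$ to a single $m$-dimensional Lipschitz graph of small Lipschitz constant, and then invoke the Carleson-type beta estimate for Lipschitz graphs (Dorronsoro's affine approximation theorem, later subsumed in the David--Semmes theory of uniformly rectifiable sets).

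Concretely, since $\mu$ is $m$-rectifiable, a standard exhaustion refines the covering by the Lipschitz images $f_i([0,1]^m)$ to a covering of $\mu$-full measure by $m$-dimensional Lipschitz graphs $\Gamma_j$ whose Lipschitz constants, after appropriate rotations, are arbitrarily small. Using $\mu\ll\Haus^m$, write $\mu\res\Gamma_j = g_j\,\Haus^m\res\Gamma_j$ and split each $\Gamma_j$ along dyadic level sets of $g_j$ to obtain pieces on which the density is bounded above and below by positive constants. It therefore suffices to fix one such piece $E\subset\Gamma$ with $c\leq g\leq C$ on $E$ and verify \eqref{e:pajot-bp} with $p=2$ for $\mu$-a.e.\ $x\in E$. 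On the enclosing Lipschitz graph $\Gamma$, the auxiliary measure $\nu:=\Haus^m\res\Gamma$ is locally Ahlfors $m$-regular, so the David--Semmes theory delivers the global Carleson estimate
\begin{equation*}
 \int_{\Gamma\cap B(0,R)}\int_0^R \widetilde\beta^{(m)}_2(\nu,B(x,r))^2\,\frac{dr}{r}\,d\nu(x)<\infty \qquad\text{for every }R>0,
\end{equation*}
and Fubini makes the inner integral finite at $\nu$-a.e.\ $x\in\Gamma$. Since $\nu(B(x,r))\sim r^m$ at $\nu$-a.e.\ $x\in\Gamma$, the two normalizations \eqref{e:beta-p} and \eqref{e:reg-normalization} agree at such points up to a multiplicative constant.

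It remains to transfer the integrability from $\nu$ to $\mu=g\,\nu\res E$. At a Lebesgue-type point $x\in E$ of $g$ (a $\mu$-full-measure condition because $g$ is bounded away from $0$ and $\infty$ on $E$), using the $\beta^{(m)}_2(\nu,B(x,r))$-optimal plane as a competitor in the definition of $\beta^{(m)}_2(\mu,B(x,r))$ yields $\beta^{(m)}_2(\mu,B(x,r))\leq C\,\beta^{(m)}_2(\nu,B(x,r))$ for all sufficiently small $r>0$, which completes the proof at $x$. The principal obstacle I anticipate is the first reduction: producing a decomposition of an abstract $m$-rectifiable absolutely continuous measure into countably many Lipschitz graph pieces with \emph{simultaneous} control on both the Lipschitz constant and the density. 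This step uses Egorov's theorem applied to the countable parametrizations $f_i$ and is the point at which absolute continuity is genuinely exploited; everything after it is a careful but routine translation between Hausdorff measure on a Lipschitz graph and its reweightings by a bounded density.
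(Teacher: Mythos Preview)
The paper does not prove this theorem; it is quoted as a result from \cite{Tolsa-n} (see the sentence preceding the statement: ``In a forthcoming paper, Tolsa \cite{Tolsa-n} proves that \eqref{e:pajot-bp} is a necessary condition\dots''). So there is no proof in the paper to compare against, and your proposal should be evaluated on its own merits.

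There is a genuine gap in your transfer step. You write ``transfer the integrability from $\nu$ to $\mu=g\,\nu\res E$'' and then compare $\beta^{(m)}_2(\mu,B(x,r))$ to $\beta^{(m)}_2(\nu,B(x,r))$ by using the $\nu$-optimal plane as a competitor. But $\mu$ in the statement is the \emph{full} measure on $\RR^n$, not $\mu\res E$. The quantity $\beta^{(m)}_2(\mu,B(x,r))$ sees all of $\mu$ in the ball, including the mass carried by the other pieces $\Gamma_{j'}$ of your decomposition. If $\ell$ is the plane that is optimal for $\nu=\Haus^m\res\Gamma$, then for $y\in B(x,r)\setminus E$ one only has the trivial bound $\dist(y,\ell)\lesssim r$, which yields
\[
\beta^{(m)}_2(\mu,B(x,r))^2 \;\lesssim\; \beta^{(m)}_2(\nu,B(x,r))^2 \;+\; \frac{\mu(B(x,r)\setminus E)}{\mu(B(x,r))}.
\]
The Lebesgue density theorem tells you the last ratio tends to $0$ at $\mu$-a.e.\ $x\in E$, but it gives no rate, and there is no reason for $\int_0^1 \mu(B(x,r)\setminus E)/\mu(B(x,r))\,dr/r$ to be finite. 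This is not a technicality that Egorov or a finer level-set decomposition will absorb: different pieces of the decomposition interact across scales, and controlling that interaction is precisely where the work lies. Tolsa's actual argument is considerably more involved (stopping-time decompositions, careful corona-type constructions) for this reason; the Dorronsoro/David--Semmes estimate on a single graph is an ingredient, but it does not by itself handle the cross-piece contribution to $\beta^{(m)}_2(\mu,\cdot)$.
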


\begin{corollary}\label{c:B} Let $1\leq m\leq n-1$ and let $1\leq p\leq 2$. If $\mu$ is a locally finite Borel measure on $\RR^n$ such that $\mu\ll\Haus^m$, then the following are equivalent: \begin{itemize} \item $\mu$ is $m$-rectifiable;
\item $\lD{m}(\mu,x)>0$ at $\mu$-a.e.~ $x\in\RR^n$ and \eqref{e:pajot-bp} holds.
\end{itemize}\end{corollary}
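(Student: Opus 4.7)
The corollary is the combination of Theorem A (the forward implication from the density/beta condition to rectifiability) with Theorem 4 (Tolsa's converse) and the general implication \eqref{e:rect-ld}. The plan is to verify the hypothesis ranges match and then cite each direction.

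First I would observe that the range $1\le p\le 2$ fits the range \eqref{e:p-range} required for Theorem A. This is trivial for $m=1,2$. For $m\ge 3$ we need $p<2m/(m-2)$, and since $2m/(m-2)>2$ for every $m\ge 3$ (equivalently $m-2<m$), the assumption $p\le 2$ implies $p<2m/(m-2)$. Hence Theorem A applies whenever the density and beta-integral conditions of the corollary hold.

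For the implication from the density/beta condition to $m$-rectifiability, I would apply Theorem A directly: $\mu$ is a locally finite Borel measure on $\RR^n$ with $\mu\ll\Haus^m$, and by assumption $\lD{m}(\mu,x)>0$ at $\mu$-a.e.~$x$ and the square-integral condition \eqref{e:pajot-bp} holds, so Theorem A concludes that $\mu$ is $m$-rectifiable.

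For the converse, assume $\mu$ is $m$-rectifiable. Since $\mu\ll\Haus^m$, Theorem 4 gives \eqref{e:pajot-bp} at $\mu$-a.e.~$x\in\RR^n$ for every $1\le p\le 2$. The positivity of the lower density at $\mu$-a.e.~$x$ is the general implication \eqref{e:rect-ld}, valid for any $m$-rectifiable locally finite Borel measure. There is essentially no obstacle here; the only point requiring any thought is the compatibility of the admissible exponent ranges in Theorem A and Theorem 4, which as noted above is routine.
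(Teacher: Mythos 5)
Your proof is correct and matches the paper's intended argument: the paper deduces Corollary~\ref{c:B} by combining Theorem~\ref{t:suff} (one direction), Theorem~\ref{t:tolsa} together with \eqref{e:rect-ld} (the other direction), exactly as you do. Your explicit check that $1\leq p\leq 2$ falls within the range \eqref{e:p-range} for every $m$ is a useful detail the paper leaves implicit, but it is the same route.
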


In a companion paper to \cite{Tolsa-n}, Azzam and Tolsa \cite{AT} prove that in the case $p=2$, Theorem \ref{t:suff} holds with the hypothesis $\lD{m}(\mu,x)>0$ at $\mu$-a.e.~$x\in\RR^n$ on the lower density replaced by a weaker assumption $\uD{m}(\mu,x)>0$ at $\mu$-a.e.~$x\in\RR^n$ on the upper density.

For general $m$-rectifiable measures that are allowed to be singular with respect to $\Haus^m$, the following basic problem in geometric measure theory is still open.

\begin{problem} \label{p:gmt} For all $1\leq m\leq n-1$, find necessary and sufficient conditions in order for a locally finite Borel measure $\mu$ on $\RR^n$ to be $m$-rectifiable. (Do not assume that $\mu\ll\Haus^m$.) \end{problem}

Partial progress on Problem \ref{p:gmt} has recently been made in  \cite{GKS,BS, AM15} in the case $m=1$. In \cite{GKS}, Garnett, Killip, and Schul exhibit a family $(\nu_\delta)_{0<\delta\leq \delta_0}$ of self-similar locally finite Borel measures on $\RR^n$, which are \begin{itemize}
 \item \emph{doubling}: $0<\nu_\delta(B(x,r))\leq C_\delta\,\nu_\delta(B(x,r/2))<\infty$ for all $x\in\RR^n$ and $r>0$;
 \item \emph{badly linearly approximable}: $\beta^{(1)}_2(\nu_\delta,B(x,r))\geq c_\delta>0$ for all $x\in\RR^n$ and $r>0$;
 \item \emph{singular}: $D^1(\nu_\delta,x)=\infty$ at $\nu_\delta$-a.e.~$x\in\RR^n$ (hence $\nu_\delta\perp \Haus^1$); and,
  \item \emph{1-rectifiable}: $\nu_\delta(\RR^n\setminus\bigcup_{i}\Gamma_i)=0$ for some countable family of rectifiable curves $\Gamma_i$.
\end{itemize} In \cite{BS}, Badger and Schul identify a pointwise necessary condition for an arbitrary locally finite Borel measure $\mu$ on $\RR^n$ to be 1-rectifiable.

\begin{theorem}[{\cite[Theorem A]{BS}}] \label{t:BS-n}Let $n\geq 2$ and let $\Delta$ be a system of closed or half-open dyadic cubes in $\RR^n$ of side length at most 1. If $\mu$ is a locally finite Borel measure on $\RR^n$ and $\mu$ is 1-rectifiable, then \begin{equation*}\label{e:Jtilde} \sum_{Q\in\Delta} \beta^{(1)}_2(\mu,3Q)^2\, \frac{\diam Q}{\mu(Q)}\chi_Q(x)<\infty\quad\text{at $\mu$-a.e.~$x\in\RR^n$}.\end{equation*}
\end{theorem}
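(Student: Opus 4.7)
\emph{Proof proposal.} The plan is to reduce, via a countable decomposition, to the case of a single rectifiable curve; use Tonelli to convert the pointwise a.e.\ claim to finiteness of an integrated, scale-total sum; bound the $\mu$-beta numbers by the Jones $L^\infty$-beta of that curve plus an off-curve error; and invoke Okikiolu's $\RR^n$-version of Jones' traveling salesman estimate.

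By $1$-rectifiability, $\mu(\RR^n\setminus\bigcup_i\Gamma_i)=0$ for compact rectifiable curves $\Gamma_i=f_i([0,1])$ with $\Haus^1(\Gamma_i)<\infty$. Letting $A_i=\Gamma_i\setminus\bigcup_{j<i}\Gamma_j$ gives a Borel partition of a $\mu$-conull set, so it suffices to prove the conclusion at $\mu\res A_i$-a.e.\ point for each $i$. Fix $i$ and abbreviate $\Gamma=\Gamma_i$, $A=A_i$. Integrating the claimed sum against $\mu\res A$ and interchanging sum and integral,
\begin{equation*}
\int\sum_{Q\in\Delta}\beta^{(1)}_2(\mu,3Q)^2\,\frac{\diam Q}{\mu(Q)}\,\chi_Q\,d(\mu\res A)=\sum_{Q\in\Delta}\beta^{(1)}_2(\mu,3Q)^2\,\diam Q\,\frac{\mu(Q\cap A)}{\mu(Q)},
\end{equation*}
so it suffices to show the right-hand side is finite. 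The unusual weight $\diam Q/\mu(Q)$ in the statement, rather than the more classical $\diam Q$, is precisely what makes this Tonelli reduction viable even when $\mu$ is very thick, as in the Garnett-Killip-Schul examples.

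To bound $\beta^{(1)}_2(\mu,3Q)^2$, plug into Definition~\ref{def:beta} the $L^\infty$-best line for $\Gamma\cap 3Q$ and split the integrand over $\Gamma\cap 3Q$ and $3Q\setminus\Gamma$; since $\dist(y,\ell)\leq C\diam(3Q)$ for $y\in 3Q$, one obtains, up to dimensional constants,
\begin{equation*}
\beta^{(1)}_2(\mu,3Q)^2\;\lesssim\;\beta_\Gamma(3Q)^2\,\frac{\mu(3Q\cap\Gamma)}{\mu(3Q)}\;+\;\frac{\mu(3Q\setminus\Gamma)}{\mu(3Q)},
\end{equation*}
where $\beta_\Gamma$ is the Jones $L^\infty$ beta number of the set $\Gamma$. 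The Okikiolu traveling salesman bound gives $\sum_Q\beta_\Gamma(3Q)^2\,\diam Q\lesssim_n\Haus^1(\Gamma)<\infty$, which together with $\mu(3Q\cap\Gamma)\mu(Q\cap A)/(\mu(3Q)\mu(Q))\leq 1$ controls the first-term contribution to the displayed Tonelli sum.

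The main obstacle is the off-curve contribution $\sum_Q\mu(3Q\setminus\Gamma)\,\diam Q\,\mu(Q\cap A)/\bigl(\mu(3Q)\mu(Q)\bigr)$. Although Lebesgue-Besicovitch differentiation gives $\mu(3Q\setminus\Gamma)/\mu(3Q)\to 0$ at $\mu\res A$-a.e.\ point as $\diam Q\downarrow 0$, pointwise decay alone is insufficient to force scale-by-scale convergence, since cubes meeting a rectifiable curve carry $\sum\diam Q=\infty$ across all dyadic scales. I anticipate the resolution is a corona-style decomposition of the dyadic tree: stop at cubes where the off-curve density first exceeds a threshold $\eta$, producing a forest of trees on which the density stays $\leq\eta$; within each such tree the off-curve contribution is absorbed by $\eta$ times the on-curve (TST-controlled) contribution, while the stopping-top cubes satisfy a Carleson packing condition inherited from $\Haus^1(\Gamma)<\infty$. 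Exhausting $A$ by Borel subsets with uniform small-scale behavior (Egorov), and then taking a countable union over $i$, yields the claimed pointwise finiteness $\mu$-a.e.\ in $\RR^n$.
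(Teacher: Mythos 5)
This is a theorem quoted verbatim from another paper, \cite[Theorem A]{BS}; the present paper does not prove it, so there is no ``paper's own proof'' here to compare against. Evaluated on its own terms, your proposal captures the right coarse architecture (reduce to a single rectifiable curve, split $\beta^{(1)}_2(\mu,3Q)$ into on-curve and off-curve contributions, use Jones--Okikiolu for the on-curve part), but the handling of the off-curve term --- which you correctly identify as the crux --- does not close.

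Two concrete problems. First, the claim that inside a corona tree ``the off-curve contribution is absorbed by $\eta$ times the on-curve (TST-controlled) contribution'' conflates two sums with different weights: the on-curve contribution carries the factor $\beta_\Gamma(3Q)^2\,\diam Q$, whereas the off-curve contribution inside a tree is bounded only by $\eta\,\diam Q\,\mu(Q\cap A)/\mu(Q)$, with no $\beta^2$ gain. These are not comparable. In fact $\sum_{Q\ni x}\diam Q\cdot \mu(Q\cap A)/\mu(Q)$ is already infinite for something as simple as $\mu=\Haus^1$ on a line segment and $A=\Gamma$ (all the ratios are $\asymp 1$ and you sum $\diam Q$ over every scale), so multiplying by a fixed $\eta$ does nothing; you would need a genuine Carleson packing for $\diam Q\cdot\mu(Q\cap A)/\mu(Q)$, which is not available and is in fact morally close to the quantity the theorem is asserting control over. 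Second, the asserted Carleson packing for the stopping-top cubes ``inherited from $\Haus^1(\Gamma)<\infty$'' is also unjustified: disjoint dyadic cubes meeting a rectifiable curve need not satisfy $\sum\diam R\lesssim\Haus^1(\Gamma)$ (a single point of $\Gamma$ can lie in disjoint cubes of every scale), and the stopping condition $\mu(3R\setminus\Gamma)/\mu(3R)>\eta$ does not by itself yield such a bound. So the proposed corona scheme, as sketched, would not produce the required finiteness.

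A subsidiary issue: the Tonelli reduction, as written, asks for finiteness of $\sum_Q\beta^{(1)}_2(\mu,3Q)^2\,\diam Q\,\mu(Q\cap A)/\mu(Q)$, which is strictly stronger than the pointwise a.e.\ conclusion and may well be infinite in general. You gesture at an Egorov exhaustion of $A$ at the very end, but that step has to come \emph{before} the Tonelli integration (replacing $A$ by compact subsets $A'$ on which, say, the off-curve density is uniformly below a threshold at all sufficiently small scales), and even then the resulting tree sums need a real Carleson bound. The argument as proposed has a genuine gap in the off-curve estimate, and the reader should consult \cite{BS} for the actual multiscale argument used there.
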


The second result of this note is a sufficient condition for a measure $\mu$ with $D^1(\mu,x)=\infty$ at $\mu$-a.e.~$x\in\RR^n$ to be 1-rectifiable.

\begin{maintheorem}\label{t:C} Let $n\geq 2$ and let $\Delta$ be a system of half-open dyadic cubes in $\RR^n$ of side length at most 1. If $\mu$ is a locally finite Borel measure on $\RR^n$ and \begin{equation}\label{e:C}\sum_{Q\in\Delta}\frac{\diam Q}{\mu(Q)}\chi_Q(x)<\infty\quad\text{at $\mu$-a.e. $x\in\RR^n$},\end{equation} then $\mu$ is 1-rectifiable, and moreover, there exist a countable family of rectifiable curves $\Gamma_i$ and Borel sets $B_i\subseteq \Gamma_i$ such that $\Haus^1(B_i)=0$ for all $i\geq 1$ and $\mu\left(\RR^n\setminus\bigcup_{i=1}^\infty B_i\right)=0$. \end{maintheorem}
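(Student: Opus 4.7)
The plan is to decompose $\mu$ via the level sets of the sum appearing in \eqref{e:C}, identify at each scale a family of ``rich'' cubes whose total diameter is finite, and feed the result into the Jones--Okikiolu Analyst's Traveling Salesman Theorem. Set $\sigma(x):=\sum_{Q\in\Delta}\frac{\diam Q}{\mu(Q)}\chi_Q(x)$ and let $E_N:=\{\sigma\le N\}$, so $E_N$ is Borel and $\mu(\RR^n\setminus\bigcup_N E_N)=0$. Since $\sigma(x)<\infty$ forces $\diam Q_k(x)/\mu(Q_k(x))\to 0$ along the unique cubes $Q_k(x)\in\Delta$ of side $2^{-k}$ containing $x$, the dyadic density $\mu(Q_k(x))/\diam Q_k(x)$ blows up at $\mu$-a.e.~$x$. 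This upgrades to $\uD{1}(\mu,x)=\infty$ a.e.\ and, via a standard Frostman-type density bound, forces $\mu\perp\Haus^1$. After intersecting each $E_N$ with a unit-side cube $R\in\Delta$, it suffices by countable union to show that each $F:=E_N\cap R$ is carried on a rectifiable curve up to a $\mu$-null set.

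Fix such an $F$. Call a dyadic subcube $Q\subseteq R$ \emph{rich} if $\mu(Q\cap F)\ge\tfrac{1}{2}\mu(Q)$, and write $\mathcal{F}$ for the collection of rich cubes. Using $1\le 2\mu(Q\cap F)/\mu(Q)$ on $\mathcal{F}$ and swapping sum and integral gives the key length estimate
$$\sum_{Q\in\mathcal{F}}\diam Q\ \le\ 2\int_F\sum_{Q\in\mathcal{F}}\chi_Q(x)\frac{\diam Q}{\mu(Q)}\,d\mu(x)\ \le\ 2\int_F\sigma\,d\mu\ \le\ 2N\mu(F)<\infty.$$
Since $\mu(Q_k(x))>0$ for every $k$ whenever $\sigma(x)<\infty$, the dyadic Lebesgue differentiation theorem (reverse martingale convergence, applicable to any locally finite Borel measure) gives $\mu(Q_k(x)\cap F)/\mu(Q_k(x))\to 1$ for $\mu$-a.e.\ $x\in F$. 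Hence the Borel sets $F_m^*:=\{x\in F:Q_k(x)\in\mathcal{F}\text{ for all }k\ge m\}$ are nested in $m$ and exhaust $F$ up to a $\mu$-null set.

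To finish, I apply Jones--Okikiolu to each bounded set $F_m^*$. For $k\ge m$ every level-$k$ dyadic cube meeting $F_m^*$ is rich, so there are at most $|\mathcal{F}_k|$ such cubes; for $k<m$ the number of level-$k$ subcubes of $R$ is at most $2^{nk}$. Since the classical $L^\infty$ Jones beta of a set is bounded by $1$ and vanishes whenever $3Q$ misses $F_m^*$,
$$\sum_{Q\in\Delta}\beta_{F_m^*}(3Q)^2\diam Q\ \lesssim_n\ \sum_{k<m}2^{(n-1)k}+\sum_{k\ge m}|\mathcal{F}_k|\cdot 2^{-k}\ \lesssim_n\ 2^{(n-1)m}+\sum_{Q\in\mathcal{F}}\diam Q<\infty,$$
where the tail is controlled by the length estimate above. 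Jones--Okikiolu then produces a rectifiable curve $\Gamma_{N,R,m}\supseteq F_m^*$, and the countable family $\{\Gamma_{N,R,m}\}$ covers $\mu$ up to a $\mu$-null set, proving $1$-rectifiability. Because $\mu\perp\Haus^1$, each $\Gamma_i$ can be written as a Borel disjoint union $\Gamma_i=A_i\sqcup B_i$ with $\Haus^1(B_i)=0$ and $\mu(A_i)=0$, yielding the $B_i$ in the statement.

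The main obstacle I anticipate is choosing the right ``rich'' threshold so that two incompatible-looking requirements hold simultaneously: rich cubes must have summable total diameter (forced by the $2N\mu(F)$ bound), and $\mu$-a.e.\ point must eventually lie inside them (forced by dyadic differentiation). Once both are secured, the trivial estimate $\beta\le 1$ collapses the Jones--Okikiolu input to a counting problem over rich cubes, making the final TST application essentially automatic.
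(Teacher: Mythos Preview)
Your argument is correct and arrives at the same conclusion, but the route differs from the paper's in three places. First, where you fix the threshold $\tfrac12$ and invoke dyadic Lebesgue (martingale) differentiation to produce the exhaustion $F=\bigcup_m F_m^*$ up to a $\mu$-null set, the paper instead introduces a parameter $\varepsilon$ and calls a cube \emph{bad} if some ancestor $R$ satisfies $\mu(A\cap R)\le \varepsilon\,\mu(A)\,\mu(R)$; a maximal-bad-cube counting gives $\mu(A\setminus B)\le \varepsilon\,\mu(Q_0)\,\mu(A)$ directly, and one lets $\varepsilon\to 0$ along a sequence. Second, you obtain the curve by feeding the trivial bound $\beta\le 1$ and a cube-count into the Jones--Okikiolu Traveling Salesman Theorem, whereas the paper avoids the TST entirely: because its good cubes form a rooted tree (good $\Rightarrow$ good parent), it simply joins centers of good cubes by segments, bounds $\Haus^1$ of the closure by $\tfrac12\sum_{\text{good}}\diam Q$, and cites the elementary fact that a compact connected set of finite $\Haus^1$ measure is a Lipschitz image of $[0,1]$. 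Third, you deduce $\Haus^1(B_i)=0$ a posteriori from $\mu\perp\Haus^1$ (via $\uD{1}(\mu,\cdot)=\infty$ $\mu$-a.e.\ and a density comparison), while the paper reads it off immediately from the tail $\sum_{\text{good }Q,\ \diam Q\le 2^{-j}}\diam Q\to 0$. In short, your proof is a clean black-box reduction to standard tools (martingale differentiation, TST, density--singularity), while the paper's proof is more self-contained and slightly more elementary, needing only the ``finite-length continuum $\Rightarrow$ curve'' lemma as an external input; the core diameter-summability estimate $\sum\diam Q\lesssim \int \sigma\,d\mu$ is the same in both. One small note: in your TST step the nonzero contributions come from cubes $Q$ with $3Q\cap F_m^*\neq\emptyset$, not $Q\cap F_m^*\neq\emptyset$; this only costs an extra factor $3^n$ in the count, already absorbed in your $\lesssim_n$.
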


Together Theorem \ref{t:BS-n} and Theorem \ref{t:C} provide a full characterization of 1-rectifiability of measures with ``pointwise large beta number" \eqref{e:large-beta}. Examples of measures that satisfy this beta number condition include the measures $(\nu_\delta)_{0<\delta\leq \delta_0}$ from \cite{GKS}, or more generally, any doubling measure $\mu$ on $\RR^n$ whose support is $\RR^n$.

\begin{corollary}\label{c:D} Let $n\geq 2$ and let $\Delta$ be a system of half-open dyadic cubes in $\RR^n$ of side length at most 1. If $\mu$ is a locally finite Borel measure such that \begin{equation}\label{e:large-beta} \liminf_{\stackrel{Q\in\Delta, x\in Q}{\diam Q\rightarrow 0}} \beta^{(1)}_2(\mu,3Q)>0\quad\text{at $\mu$-a.e.~ $x\in\RR^n$},\end{equation} then $\mu$ is 1-rectifiable if and only if \eqref{e:C} holds.\end{corollary}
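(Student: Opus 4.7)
\textbf{Proof proposal for Corollary \ref{c:D}.}
The ``$\Leftarrow$'' direction is immediate from Theorem \ref{t:C}: the assumption \eqref{e:large-beta} is not needed at all to deduce 1-rectifiability from \eqref{e:C}.

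For ``$\Rightarrow$'', the strategy is to combine Theorem \ref{t:BS-n} with the pointwise lower bound \eqref{e:large-beta}. Assume that $\mu$ is 1-rectifiable, and fix $x\in\RR^n$ in the full $\mu$-measure set on which both the conclusion of Theorem \ref{t:BS-n} and the hypothesis \eqref{e:large-beta} hold. Then
$$\sum_{Q\in\Delta}\beta^{(1)}_2(\mu,3Q)^{2}\,\frac{\diam Q}{\mu(Q)}\chi_{Q}(x)<\infty,$$
and there exist $c=c(x)>0$ and $\delta=\delta(x)>0$ such that $\beta^{(1)}_2(\mu,3Q)\geq c$ for every $Q\in\Delta$ with $x\in Q$ and $\diam Q\leq\delta$.

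From here the estimate is immediate:
$$\sum_{\substack{Q\in\Delta,\,x\in Q\\ \diam Q\leq\delta}}\frac{\diam Q}{\mu(Q)}\leq\frac{1}{c^{2}}\sum_{\substack{Q\in\Delta,\,x\in Q\\ \diam Q\leq\delta}}\beta^{(1)}_2(\mu,3Q)^{2}\,\frac{\diam Q}{\mu(Q)}<\infty.$$
Only finitely many $Q\in\Delta$ satisfy $x\in Q$ and $\delta<\diam Q\leq 1$, and for each such $Q$ one has $\mu(Q)>0$ (otherwise a neighborhood of $x$ would be $\mu$-null, putting $x$ outside the support of $\mu$, a $\mu$-null event). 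Adding these finitely many terms to the tail yields \eqref{e:C} at $x$, and hence at $\mu$-a.e.~$x$.

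The corollary is a clean synthesis of Theorem \ref{t:BS-n} (necessity of a weighted sum for 1-rectifiable measures) and Theorem \ref{t:C} (sufficiency of an unweighted sum), so there is no genuine technical obstacle; the only substantive step is the observation that \eqref{e:large-beta} lets the weights $\beta^{(1)}_2(\mu,3Q)^{2}$ be absorbed into a pointwise constant at small scales, with the large-scale contribution handled by a trivial finiteness argument.
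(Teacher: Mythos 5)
Your argument is correct and is exactly the one the paper has in mind (the corollary is stated as an immediate consequence of Theorems \ref{t:BS-n} and \ref{t:C}, with no separate proof given): the backward implication is Theorem \ref{t:C}, and the forward implication absorbs the pointwise lower bound from \eqref{e:large-beta} into the weighted sum of Theorem \ref{t:BS-n}. One small imprecision: your justification that $\mu(Q)>0$ for the finitely many large cubes containing $x$ invokes ``a neighborhood of $x$ would be $\mu$-null,'' but a half-open dyadic cube $Q\ni x$ need not contain an open neighborhood of $x$ when $x$ lies on a retained face of $Q$; the clean reason is simply that $\bigcup\{Q\in\Delta:\mu(Q)=0\}$ is a countable union of $\mu$-null sets, hence $\mu$-null, so $\mu$-a.e.\ $x$ lies in no $\mu$-null cube of $\Delta$.
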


Finally, we note that in recent work Azzam and Mourgoglou \cite{AM15} give a weaker condition for 1-rectifiability of a doubling measure with connected support.
\begin{theorem}[\cite{AM15}] \label{AM15} Let $\mu$ be a doubling measure whose support is a topologically connected metric space $X$ and let $E\subseteq X$ be compact. Then $\mu\res E$ is 1-rectifiable if and only if $\lD{1}(\mu,x)>0$ for $\mu$-a.e.~$x\in E$.
\end{theorem}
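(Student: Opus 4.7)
The ``only if'' direction is immediate from the general implication \eqref{e:rect-ld}: if $\mu\res E$ is 1-rectifiable, then $\lD{1}(\mu\res E,x)>0$ at $\mu\res E$-a.e.~$x$, and since $\mu\res E\leq \mu$ we have $\lD{1}(\mu,x)\geq \lD{1}(\mu\res E,x)>0$ at $\mu$-a.e.~$x\in E$. All the content of the theorem is in the ``if'' direction, so I focus on that.

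The plan has three stages. First, by a countable Borel decomposition of $E$ according to the values of $\lD{1}(\mu,\cdot)$ and the scale at which the liminf is nearly attained, it suffices to treat the case where $E$ is compact and there exist uniform constants $c_0,r_0>0$ with $\mu(B(x,r))\geq c_0 r$ for every $x\in E$ and $0<r\leq r_0$; the general conclusion then follows by taking a countable union. Second, a standard Vitali/$5r$-covering argument (of the type used to establish the density lemmas in \cite[Chapter 6]{Mattila}) turns this uniform lower bound into the mass-dimension estimate $\Haus^1(E)\leq C c_0^{-1}\mu(E)<\infty$, and yields the absolute continuity $\mu\res E\ll \Haus^1\res E$ with bounded density. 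Consequently it is enough to prove that the Borel set $E$ itself is 1-rectifiable, i.e.~that $\Haus^1\res E$ is 1-rectifiable.

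The third and substantive stage exploits the doubling hypothesis on $\mu$ together with the topological connectedness of $X=\mathrm{supp}\,\mu$. Doubling plus connectedness forces $X$ to be \emph{uniformly perfect}: there is $\lambda>1$ such that $(B(x,r)\setminus B(x,r/\lambda))\cap X\neq \emptyset$ whenever $x\in X$ and $0<r<\diam X$. Iterating this gives, for each $x\in X$ and nested scales, a chain of points in $X\cap B(x,r)$ at geometrically decreasing distances, and combined with doubling it upgrades to a polynomial lower mass bound $\mu(B(x,r))\geq c(r/R)^s\mu(B(x,R))$ for all $x\in X$ and $0<r\leq R$. The strategy is then to run a multi-scale stopping-time construction at $\mu\res E$-a.e.~point $x\in E$: at each scale one uses the connectedness of $X$ to connect nearby ``good'' points of $E$ by short arcs drawn from the support, uses the uniform lower density $\mu(B(\cdot,r))\gtrsim r$ on $E$ to control the total arclength added at each scale (each arc of length $\ell$ can be charged against $\mu$-mass $\gtrsim \ell$ on a $c r$-neighborhood of it), and uses doubling to ensure that these neighborhoods have bounded overlap. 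This produces a countable family of rectifiable curves $\Gamma_i\subseteq X$ whose union carries $\mu\res E$.

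The main obstacle is precisely this last construction: turning the purely topological connectedness of $X$ into a quantitative covering of $E$ by rectifiable curves. The hypothesis $\lD{1}(\mu,x)>0$ is too weak to invoke Theorem~C directly, because it only forces $\diam Q/\mu(Q)\lesssim 1$ along the dyadic chain through $x$, giving a divergent sum, and the beta-number sufficient conditions of Pajot/Tolsa/Azzam--Tolsa are unavailable since we are not assuming $\mu\ll \Haus^1$. Thus the heart of the proof must extract curves from the geometry of $X$ itself. Doubling and connectedness provide the local building blocks (short arcs of controlled diameter), and the lower density on $E$ provides the mass to pay for them; combining these into a global traveling-salesman-type algorithm with a favorable length/mass accounting is the key technical step that differentiates this theorem from the results in the rest of the paper.
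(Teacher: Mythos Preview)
This theorem is not proved in the present paper: it is quoted from \cite{AM15} and stated without proof, so there is no argument here to compare your proposal against. Any assessment of your outline has to be made against the Azzam--Mourgoglou paper itself, not this one.

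That said, your write-up is a plan rather than a proof, and you yourself identify the gap. Stages one and two (countable exhaustion to a uniform lower bound, then the density/covering argument giving $\Haus^1(E)<\infty$ and $\mu\res E\ll\Haus^1\res E$) are routine and correct. The difficulty is entirely in stage three, and your description there does not close. In particular, you propose to ``connect nearby good points of $E$ by short arcs drawn from the support,'' but the hypothesis is only that $X=\mathrm{supp}\,\mu$ is \emph{topologically} connected; nothing guarantees that $X$ is path-connected, let alone that the connecting arcs are rectifiable with length comparable to the distance between the endpoints. Uniform perfectness (which, as you note, follows already from connectedness) gives you nearby points at every scale, but not curves joining them. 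Turning connectedness plus doubling plus the lower density on $E$ into an honest supply of rectifiable curves with controlled total length is exactly the substance of \cite{AM15}, and your outline does not supply a mechanism for it beyond the phrase ``traveling-salesman-type algorithm.'' Until that step is made precise, the proposal remains a restatement of what needs to be done rather than a proof.
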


When applied to a doubling measure $\mu$ on $\RR^n$ whose support is $\RR^n$, Corollary \ref{c:D} and Theorem \ref{AM15} imply that if $\lD{1}(\mu,x)>0$ at $\mu$-a.e. $x\in\RR^n$, then \eqref{e:C} holds.

\bigskip

The remainder of this note is split into two sections. We prove Theorem \ref{t:suff} in \S2 and we prove Theorem \ref{t:C} in \S3.

\section{Proof of Theorem \ref{t:suff}}

We show how to reduce Theorem \ref{t:suff} to Theorem \ref{t:pajot} using standard geometric measure theory techniques; see Chapters 1, 2, 4, and 6 of \cite{Mattila} for general background. In fact, we will establish the following ``localized version" of Theorem \ref{t:suff}.

\begin{theorem} Let $1\leq m\leq n-1$ and let \begin{equation}\label{e:p-range2} \left\{\begin{array}{ll}1\leq p<\infty &\text{ if $m=1$ or $m=2$,}\\ 1 \leq p<2m/(m-2) &\text{ if $m\geq 3$.}\end{array}\right.\end{equation} If $\mu$ is a locally finite Borel measure on $\RR^n$ such that \begin{equation*}J_p(\mu,x):=\int_0^1 \beta^{(m)}_p(\mu,B(x,r))^2\,\frac{dr}{r}<\infty\quad\text{at $\mu$-a.e.~$x\in\RR^n$},\end{equation*} then $\mu\res\left\{x\in\RR^n: 0<\lD{m}(\mu,x)\leq \uD{m}(\mu,x)<\infty\right\}$ is $m$-rectifiable.
\end{theorem}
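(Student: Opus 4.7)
The strategy is to reduce to Pajot's Theorem~\ref{t:pajot} via a stratification-then-exhaustion argument and a transfer of hypotheses from $\mu$ to $\Haus^m\res K$ for suitably chosen compact $K$. First let
\[
E_N := \left\{x\in\RR^n : N^{-1}\le \lD{m}(\mu,x),\ \uD{m}(\mu,x)\le N,\ J_p(\mu,x)\le N\right\}, \qquad N=1,2,\dots,
\]
which is Borel (since the densities and $J_p$ are Borel functions of $x$). Under the hypotheses, $\mu$-a.e.~point of $\{0<\lD{m}(\mu,\cdot)\le\uD{m}(\mu,\cdot)<\infty\}$ lies in some $E_N$. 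Since countable unions of $m$-rectifiable measures are $m$-rectifiable and $\mu$ is inner regular, it suffices to show $\mu\res K$ is $m$-rectifiable for every compact $K\subseteq E_N$. Fix such a $K$.

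The uniform density bounds on $K$, combined with standard estimates (see Mattila~\cite[Ch.~6]{Mattila}), yield constants $c,C>0$ depending only on $N$ and $m$ such that $c\,\Haus^m(A\cap K)\le \mu(A\cap K)\le C\,\Haus^m(A\cap K)$ for every Borel $A\subseteq\RR^n$. In particular $\Haus^m(K)<\infty$, so $\Haus^m\res K$ is a finite Borel measure on the compact set $K$, and $\mu\res K$ and $\Haus^m\res K$ are mutually absolutely continuous with comparable masses on every Borel set. I would then verify Pajot's hypotheses for $\Haus^m\res K$; since $\mu\res K\ll\Haus^m\res K$, $m$-rectifiability of $\Haus^m\res K$ transfers to $\mu\res K$ through the same Lipschitz carriers.

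For the lower-density hypothesis, at $\mu$-a.e.~$x\in K$ the Lebesgue differentiation theorem applied to $\chi_K\in L^1_{\mathrm{loc}}(\mu)$ (valid for Radon measures on $\RR^n$ via Besicovitch covering) gives $\mu(B(x,r)\cap K)/\mu(B(x,r))\to 1$ as $r\downarrow 0$. Combined with $\Haus^m\res K\ge C^{-1}\mu\res K$ and $\lD{m}(\mu,x)\ge N^{-1}$, this forces $\lD{m}(\Haus^m\res K,x)>0$ at $\Haus^m\res K$-a.e.~$x\in K$. The main technical hurdle is transferring the beta-square integrability, because a priori $\mu(B(x,r))$ and $\mu\res K(B(x,r))$ need not be comparable. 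At a typical $x$ as above, pick $r_x>0$ so that $\mu(B(x,r))\le 2\mu\res K(B(x,r))$ for $r\in(0,r_x]$. Testing the infimum in $\beta^{(m)}_p(\mu\res K,B(x,r))$ against the minimizer for $\beta^{(m)}_p(\mu,B(x,r))$ and bounding the restricted integral by the full one over $B(x,r)$ produces
\[
\beta^{(m)}_p(\mu\res K,B(x,r)) \le 2^{1/p}\,\beta^{(m)}_p(\mu,B(x,r)) \qquad \text{for } r\in(0,r_x],
\]
and the two-sided measure comparison upgrades this to $\beta^{(m)}_p(\Haus^m\res K,B(x,r)) \le C_{N,m,p}\,\beta^{(m)}_p(\mu,B(x,r))$ on the same range. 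Splitting $J_p(\Haus^m\res K,x)$ at the scale $r_x$ — the tail over $(r_x,1]$ is bounded by $\log(1/r_x)$ since $\beta^{(m)}_p\le 1$ — together with $J_p(\mu,x)<\infty$ yields $J_p(\Haus^m\res K,x)<\infty$ at $\mu$-a.e.~$x\in K$, hence $\Haus^m\res K$-a.e. Pajot's theorem then applies to $\Haus^m\res K$ and completes the proof.
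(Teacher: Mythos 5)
Your proposal is correct and follows essentially the same route as the paper: stratify by density bounds, reduce to a compact piece $K$, establish two-sided comparability of $\mu\res K$ and $\Haus^m\res K$ via Mattila's density-ratio theorems, transfer the $\beta$-number integrability to $\Haus^m\res K$ at small scales using Lebesgue density points of $K$ (splitting $J_p$ at $r_x$), and apply Pajot. The only cosmetic difference is that the paper encodes compactness directly in its stratification $A(j,k)$ (imposing uniform bounds $2^{-j}r^m\le\mu(B(x,r))\le 2^j r^m$ for all $r\le 2^{-k}$, so that $\overline{A(j,k)}\subseteq A(j+1,k+1)$) instead of appealing to inner regularity, but the substance is the same.
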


\begin{proof} Without loss of generality, we assume for the duration of the proof that $\Haus^m$ is normalized so that $\omega_m=\Haus^m(B^m(0,1))=2^m$. This is the convention used in \cite{Mattila}.

Suppose that $1\leq m\leq n-1$, let $p$ belong to the range (\ref{e:p-range2}), and let $\mu$ be a locally finite Borel measure on $\RR^n$ such that $J_p(\mu,x)<\infty$ at $\mu$-a.e.~ $x\in\RR^n$. Define $$A:=\left\{x\in \RR^n: 0<\lD{m}(\mu,x)\leq \uD{m}(\mu,x)<\infty\right\}.$$ Also, for each pair of integers $j,k\geq 1$, define $$A(j,k):=\left\{x\in B(0,2^k): 2^{-j}r^m \leq \mu(B(x,r))\leq 2^jr^m\text{ for all }0<r\leq 2^{-k}\right\}.$$ Then $\overline{A(j,k)}$ is compact and  $\overline{A(j,k)}\subseteq A(j+1,k+1)$ for all $j,k\geq 1$. Also note that $$A=\bigcup_{j,k=1}^\infty A(j,k) = \bigcup_{j,k=1}^\infty \overline{A(j,k)},$$ Thus, to prove that $\mu\res A$ is $m$-rectifiable, it suffices to verify that $\mu\res\overline{A(j,k)}$ is $m$-rectifiable for all $j,k\geq 1$.

Fix any $j,k\geq 1$ and set $K:=\overline{A(j,k)}$, $\nu:=\mu\res K$, and $\sigma:=\Haus^m\res K$. In order to prove that $\nu$ is $m$-rectifiable, it is enough to show that $\nu\ll\sigma\ll\nu$ and $\sigma$ is $m$-rectifiable. By Theorem 6.9 in \cite{Mattila},  since $2^{-j-1-m}\leq\uD{m}(\mu,x)\leq 2^{j+1-m}$ for all $x\in K$, we have \begin{equation}\label{e:nu} \nu(B(x,r))=\mu(K\cap B(x,r))\leq 2^{j+1}\Haus^m(K\cap B(x,r))=2^{j+1}\sigma(B(x,r))\end{equation} and \begin{equation}\label{e:sigma} \sigma(B(x,r))=\Haus^m(K\cap B(x,r)) \leq 2^{j+1+m} \mu(K\cap B(x,r))=2^{j+1+m}\nu(B(x,r))\end{equation} for all $x\in\RR^n$ and $r>0$. Note that $$\sigma(\RR^n)=\sigma(B(0,2^k)) \leq 2^{j+1+m}\mu(B(0,2^k))<\infty,$$ since $\mu$ is locally finite. That is, $\sigma$ is a finite measure. Thus, $\nu$ and $\sigma$ are mutually absolutely continuous by \eqref{e:nu}, \eqref{e:sigma}, and Lemma 2.13 in \cite{Mattila}. Now,
 \begin{equation}\label{e:a1} \sigma(B(x,r))\leq 2^{j+1+m}\mu(B(x,r))\leq 2^{2j+2+m}r^m\quad\text{for all }x\in K\text{ and }0<r\leq 2^{-k-1}.\end{equation} On the other hand,
let $K'$ denote the set of $x\in K$ such that $$2\nu(B(x,r))=2\mu(K\cap B(x,r))\geq \mu(B(x,r))\quad\text{for all }0<r\leq r_x$$ for some $r_x\leq 2^{-k-1}$. Then $\sigma(\RR^n\setminus K')=0$, because $\nu(\RR^n\setminus K')=\mu(K\setminus K')=0$, and \begin{equation}\label{e:a2}\sigma(B(x,r))\geq 2^{-j-2}\mu(B(x,r))\geq 2^{-2j-3}r^m\quad\text{for all }x\in K'\text{ and }0<r\leq r_x.\end{equation} In particular, $\lD{m}(\sigma,x)\geq c(m,j)>0$ at $\sigma$-a.e.~$x\in\RR^n$. To conclude that $\sigma$ is $m$-rectifiable using Theorem \ref{t:pajot}, it remains to verify $J_p(\sigma,x)<\infty$ at $\sigma$-a.e.~$x\in\RR^n$.

By \eqref{e:a1} and \eqref{e:a2}, there exists a constant $C=C(m,j)<\infty$ such that $$C^{-1}\leq \frac{\nu(B(x,r))}{\sigma(B(x,r))}\leq C\quad\text{for all $0<r\leq r_x$ at $\sigma$-a.e.~ $x\in\RR^n$}.$$ Thus, by differentiation of Radon measures, we can write $d\nu=f\, d\sigma$, where $f\in L^1_{\rm{loc}}(d\sigma)$ and $C^{-1} \leq f(x) \leq C$ at $\sigma$-a.e. $x\in\RR^n$. Therefore, at $\sigma$-a.e.~ $x\in \RR^n$, for every $0<r\leq r_x$ and for every $m$-dimensional affine plane $\ell$, \begin{align*}\int_{B(x,r)} \left(\frac{\dist(y,\ell)}{\diam B(x,r)}\right)^p \frac{d\sigma(y)}{\sigma(B(x,r))}&\leq C^2 \int_{B(x,r)} \left(\frac{\dist(y,\ell)}{\diam B(x,r)}\right)^p \frac{d\nu(y)}{\nu(B(x,r))}\\
&\leq 2C^2 \int_{B(x,r)} \left(\frac{\dist(y,\ell)}{\diam B(x,r)}\right)^p \frac{d\mu(y)}{\mu(B(x,r))}. \end{align*} Thus, $\beta^{(m)}_p(\sigma,B(x,r))^2 \leq \left(2C^2\right)^{2/p} \beta^{(m)}_p(\mu,B(x,r))^2$ for all $0<r\leq r_x$ at $\sigma$-a.e.~$x\in\RR^n$. Since $J_p(\mu,x)<\infty$ at $\mu$-a.e.~$x\in \RR^n$ and $\sigma\ll\mu$, it follows that $J_p(\sigma,x)<\infty$ at $\sigma$-a.e.~$x\in\RR^n$. Finally, since $K$ is compact, $\sigma=\Haus^m\res K$ is finite, and $\lD{m}(\sigma,x)>0$ and $J_p(\sigma,x)<\infty$ at $\sigma$-a.e.~$x\in\RR^n$, we conclude that $\sigma$ is $m$-rectifiable by Theorem \ref{t:pajot}. As noted above, this implies that $\nu=\mu\res\overline{A(j,k)}$ is $m$-rectifiable for all $j,k\geq 1$, and therefore, $\mu\res A$ is $m$-rectifiable.
\end{proof}

\section{Proof of Theorem \ref{t:C}}

For every Borel measure $\mu$ on $\RR^n$, define the quantity $$S(\mu,x):= \sum_{Q\in\Delta} \frac{\diam Q}{\mu(Q)}\chi_Q(x)\in[0,\infty]\quad\text{for all }x\in\RR^n,$$ where $\Delta$ denotes any system of \emph{half-open} dyadic cubes in $\RR^n$ of side length at most $1$. Theorem \ref{t:C} is a special case of the following statement.

\begin{theorem}\label{t:fast} Let $n\geq 2$. If $\mu$ is a locally finite Borel measure on $\RR^n$, then $$\rho:=\mu \res\left\{x\in\RR^n: S(\mu,x)<\infty\right\}$$ is $1$-rectifiable. Moreover, there exists a countable family of rectifiable curves $\Gamma_i\subset\RR^n$ and Borel sets $B_i\subseteq \Gamma_i$ such that $\Haus^1(B_i)=0$ for all $i\geq 1$ and $\rho(\RR^n\setminus\bigcup_{i=1}^\infty B_i)=0$.\end{theorem}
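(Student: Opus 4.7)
I will reduce Theorem \ref{t:fast} to a bounded-sum case, observe singularity of $\mu$ on the good set, and then build rectifiable curves by an explicit forest construction controlled by the summability hypothesis. Partition $\{S(\mu,x)<\infty\}$ into the Borel level sets $E_N:=\{x:S(\mu,x)\leq N\}$ for $N\geq 1$ and localize to bounded regions, so that it suffices to treat each $\mu\res E_N$ as a finite measure and to combine the conclusions by a countable union at the end.

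\textbf{Step 1 (Singularity and infinite density).} For every $x\in E_N$, the unique level-$k$ dyadic cube $Q_k(x)\ni x$ satisfies $\sum_k\diam Q_k(x)/\mu(Q_k(x))\leq N$, so this ratio tends to zero. Since $Q_k(x)\subseteq B(x,\diam Q_k(x))$, a comparison between balls and dyadic cubes of comparable diameter yields $D^1(\mu,x)=\infty$ at every $x\in E_N$; standard Vitali-type estimates then give $\mu\res E_N\perp\Haus^1$. Fix a Borel set $F\subseteq E_N$ with $\Haus^1(F)=0$ and $\mu(E_N\setminus F)=0$. Once a countable family of rectifiable curves $\Gamma_i$ with $\mu(E_N\setminus\bigcup_i\Gamma_i)=0$ is produced, the Borel sets $B_i:=\Gamma_i\cap F$ automatically satisfy the moreover clause.

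\textbf{Step 2 (Forest construction).} By Lebesgue differentiation for the Radon measure $\mu$, the set $\widetilde E_N$ of $x\in E_N$ with $\mu(E_N\cap Q_k(x))/\mu(Q_k(x))\to 1$ has $\mu(E_N\setminus\widetilde E_N)=0$. Let $\mathcal G$ denote the family of dyadic cubes $Q\in\Delta$ satisfying $\mu(E_N\cap Q)\geq\tfrac12\mu(Q)$, and for each such $Q$ meeting $\widetilde E_N$ choose a representative $p_Q\in E_N\cap Q$. Form a graph $T$ on these representatives by including the line segment $[p_Q,p_{\widehat Q}]$ whenever both $Q$ and its dyadic parent $\widehat Q\in\Delta$ lie in $\mathcal G$. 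Since dyadic parentage is a tree, $T$ is a forest with countably many components. Using Fubini and the selection rule,
\[
\sum_{Q\in\mathcal G}\diam Q\;\leq\;2\sum_{Q\in\Delta}\frac{\diam Q}{\mu(Q)}\,\mu(E_N\cap Q)\;=\;2\int_{E_N}S(\mu,x)\,d\mu(x)\;\leq\;2N\mu(E_N),
\]
so the total edge length of $T$ is at most $4N\mu(E_N)<\infty$. Each compact connected component of $\overline T$ then has finite $\Haus^1$-measure and is therefore contained in a rectifiable curve, by the classical theorem that compact connected sets of finite length are Lipschitz images of $[0,1]$.

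\textbf{Step 3 (Capture) and main obstacle.} For any $x\in\widetilde E_N$, the density criterion places $Q_k(x)$ in $\mathcal G$ for all sufficiently large $k$, so $p_{Q_k(x)}\in T$ and $|p_{Q_k(x)}-x|\leq\diam Q_k(x)\to 0$; hence $x\in\overline T$, which sits inside the countable family of rectifiable curves $\Gamma_i$ produced above. Combined with Step 1, this places $\mu\res E_N$-a.e.~point in $\bigcup_i(\Gamma_i\cap F)$, completing the argument on $E_N$; a countable union over $N$ and the bounded localizations finishes Theorem \ref{t:fast}. The principal obstacle is verifying that the thinning to $\mathcal G$ does not discard positive $\mu$-mass, which is exactly what the differentiation theorem provides; a secondary technical matter is the passage from a compact connected set of finite $\Haus^1$-measure to a genuine Lipschitz parametrization, which I cite as a classical result.
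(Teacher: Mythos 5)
Your proposal is correct, but it follows a genuinely different route from the paper's. The paper's proof (Lemmas 3.2 and 3.3) constructs, for each small $\varepsilon>0$, a partition into good and bad cubes based on the \emph{hereditary} stopping criterion $\mu(A\cap R)\leq\varepsilon\mu(A)\mu(R)$ for some dyadic ancestor $R$; this makes the good cubes form a subtree rooted at $Q_0$, so the union of center-to-parent segments is a single connected tree, and the uncaptured mass is bounded by $\varepsilon\eta\mu(A)$, forcing an iteration over $\varepsilon_i\to 0$. You instead use a single $\varepsilon$-independent construction: a fixed density threshold ($\mu(E_N\cap Q)\geq\tfrac12\mu(Q)$) together with the dyadic Lebesgue differentiation theorem for $\mu$, which guarantees that $\mu$-a.e.\ $x\in E_N$ eventually falls only in good cubes. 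The price is that your family $\mathcal G$ is not hereditary, so $T$ is a forest rather than a tree; you then need (and implicitly use) that $\overline{T}$ has at most countably many nondegenerate components, each a continuum of finite length and hence a rectifiable curve. Your treatment of the ``moreover'' clause is also different and arguably cleaner: you observe directly that $S(\mu,x)<\infty$ forces $\uD{1}(\mu,x)=\infty$, hence $\Haus^1(E_N)=0$ by the standard upper density comparison, so one can simply take $B_i=\Gamma_i\cap E_N$; the paper instead derives $\Haus^1(B)=0$ from the summability $\sum_{\text{good }Q}\diam Q<\infty$ via the auxiliary sets $G_j$. In short, your argument trades the paper's $\varepsilon$-iteration and covering lemma for the dyadic martingale differentiation theorem and the density-comparison form of singularity, at the cost of a slightly more delicate topology for $\overline T$.

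Two minor points worth making explicit if you were to write this out fully. First, the finite-total-length bound on the segments of $T$ gives $\Haus^1(T)<\infty$ but not immediately $\Haus^1(\overline{T})<\infty$; you should include the short $\Haus^1_\delta$ covering computation (splitting good cubes by whether $\diam Q>\delta$ or $\leq\delta$, exactly as in the paper's Lemma 3.3) to control the closure. Second, when you write $D^1(\mu,x)=\infty$, what the dyadic estimate immediately gives is $\uD{1}(\mu,x)=\infty$; this is all that the singularity argument requires, though the full density does in fact follow by comparing intermediate radii against nested dyadic cubes.
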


We start with a lemma, which will be used to organize the proof of Theorem \ref{t:fast}.

\begin{lemma}\label{l:good-bad} Let $n\geq 1$ and let $\mu$ be a locally finite Borel measure on $\RR^n$. Given $Q_0\in\Delta$ such that $\eta:=\mu(Q_0)>0$ and $N<\infty$, let $$A:=\{x\in Q_0: S(\mu,x)\leq N\}.$$ For all $0<\varepsilon<1/\eta$, the set of dyadic cubes $Q\subseteq Q_0$ can be partitioned into \emph{good cubes} and \emph{bad cubes} with the following properties: \begin{enumerate}
\item every child of a bad cube is a bad cube;
\item the set $B:=A\setminus\bigcup\{Q:Q\subseteq Q_0$ is a bad cube$\}$ satisfies $\mu(B)\geq (1-\varepsilon\eta)\mu(A)$;
\item $\sum\diam Q< N/\varepsilon$, where the sum ranges over all good cubes $Q\subseteq Q_0$.
\end{enumerate}\end{lemma}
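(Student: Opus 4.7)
The plan is a stopping-time argument built around the Fubini identity
\[
\sum_{Q\subseteq Q_0}\diam Q\cdot\frac{\mu(Q\cap A)}{\mu(Q)} \;=\; \int_A\,\sum_{Q\subseteq Q_0,\,Q\ni x}\frac{\diam Q}{\mu(Q)}\,d\mu(x)\;\leq\; N\mu(A),
\]
which is immediate from the definition of $A$. Note that the sum on the left is finite even though $\sum_{Q\subseteq Q_0}\diam Q=\infty$, because the factor $\mu(Q\cap A)/\mu(Q)\in[0,1]$ provides the convergence; this is precisely the quantity that a good/bad dichotomy should exploit.

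I would then declare a cube $Q\subseteq Q_0$ to be a \emph{bad seed} if $\mu(Q\cap A)\leq\varepsilon\mu(A)\mu(Q)$, let $\mathcal{M}$ denote the collection of dyadic cubes maximal with this property, and call $Q$ \emph{bad} when $Q\subseteq Q'$ for some $Q'\in\mathcal{M}$ and \emph{good} otherwise. Members of $\mathcal{M}$ are pairwise disjoint by maximality, and property (1) is automatic, since any child of a bad cube still lies inside the same member of $\mathcal{M}$.

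For property (2), $B=A\setminus\bigcup\mathcal{M}$ and disjointness give
\[
\mu(A)-\mu(B)=\sum_{Q\in\mathcal{M}}\mu(Q\cap A)\;\leq\;\varepsilon\mu(A)\sum_{Q\in\mathcal{M}}\mu(Q)\;\leq\;\varepsilon\mu(A)\mu(Q_0)=\varepsilon\eta\mu(A).
\]
For property (3), every good $Q$ fails the bad-seed inequality strictly, so $\mu(Q\cap A)/\mu(Q)>\varepsilon\mu(A)$; substituting into the Fubini estimate yields $\varepsilon\mu(A)\sum_{\text{good }Q}\diam Q<N\mu(A)$, whence $\sum_{\text{good }Q}\diam Q<N/\varepsilon$, provided $\mu(A)>0$. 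The degenerate case $\mu(A)=0$ is handled by simply declaring every cube bad (equivalently, $Q_0$ itself is a bad seed under the non-strict inequality), and all three conclusions hold trivially.

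There is no substantive obstacle; the only subtle point is the choice of threshold $\varepsilon\mu(A)$ rather than the naive $\varepsilon$ or $\varepsilon\eta$. The factor $\mu(A)$ on the right-hand side of the Fubini bound matches the $\mu(A)$ one needs on the right-hand side of (2): any smaller threshold would weaken (3) to only $N/(\varepsilon\mu(A))$, while any larger one would give only the weaker conclusion $\mu(B)\geq\mu(A)-\varepsilon\eta^2$ in place of $\mu(B)\geq(1-\varepsilon\eta)\mu(A)$.
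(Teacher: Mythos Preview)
Your argument is correct and coincides with the paper's: your ``bad seed'' condition $\mu(Q\cap A)\le\varepsilon\mu(A)\mu(Q)$ with the same threshold is exactly the paper's criterion, and since every bad seed lies below a maximal one (there are only finitely many ancestors up to $Q_0$), your bad cubes agree with the paper's definition that $Q$ is bad iff some $R\supseteq Q$ satisfies the inequality. The three properties are then verified by the same computations, including the Fubini estimate and the disjointness of maximal bad cubes.
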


\begin{proof} Suppose that $n$, $\mu$, $Q_0$, $\eta$, $N$, and $A$ are given as above and let $\varepsilon>0$. If $\mu(A)=0$, then we may declare every dyadic cube $Q\subseteq Q_0$ to be a bad cube and the conclusion of the lemma hold trivially. Thus, suppose that $\mu(A)>0$. Declare that a dyadic cube $Q\subseteq Q_0$ is a \emph{bad cube} if there exists a dyadic cube $R\subseteq Q_0$ such that $Q\subseteq R$ and $\mu(A\cap R)\leq \varepsilon \mu(A)\mu(R)$. We call a dyadic cube $Q\subseteq Q_0$ a \emph{good cube} if $Q$ is not a bad cube. Property (1) is immediate. To check property (2),
observe that
$$\mu(A\setminus B) \leq \sum_{\text{maximal bad }Q\subseteq Q_0} \mu(A\cap Q) \leq \varepsilon\mu(A)\sum_{\text{maximal bad }Q\subseteq Q_0} \mu(Q) \leq \varepsilon\mu(A)\mu(Q_0),$$ where the last inequality follows because the maximal bad cubes are pairwise disjoint (since $\Delta$ is composed of half-open cubes). Recalling $\mu(Q_0)=\eta$, it follows that $$\mu(B)=\mu(A)-\mu(A\setminus B)\geq  (1-\varepsilon\eta)\mu(A).$$ Thus, property (2) holds. Finally, since $S(\mu,x)\leq N$ for all $x\in A$, $$N\mu(A)
 \geq \int_A S(\mu,x)\,d\mu(x) \geq \sum_{Q\subseteq Q_0} \diam Q\, \frac{\mu(A\cap Q)}{\mu(Q)}> \varepsilon\mu(A)\sum_{\text{good }Q\subseteq Q_0} \diam Q,$$ where we interpret $\mu(A\cap Q)/\mu(Q)=0$ if $\mu(Q)=0$. Because $\mu(A)>0$, it follows that $$\sum_{\text{good }Q\subseteq Q_0} \diam Q <\frac{N}{\varepsilon}.$$ This verifies property (3).
\end{proof}

\begin{lemma}\label{l:tree} Let $n\geq 2$ and let $\mu$ be a locally finite Borel measure on $\RR^n$. If $$\mu(\{x\in Q_0:S(\mu,x)\leq N\})>0\quad\text{for some $Q_0\in\Delta$ and $N<\infty$},$$ then for all $0<\varepsilon<1/\mu(Q_0)$ the set $B=B(\mu,Q_0,N,\varepsilon)$ described in Lemma \ref{l:good-bad} lies in a rectifiable curve $\Gamma$ with $\Haus^1(\Gamma)< N/2\varepsilon$ and $\Haus^1(B)=0$.\end{lemma}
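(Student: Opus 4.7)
The plan is to realize $\Gamma$ as the closure of a tree of line segments attached to the good cubes produced by Lemma~\ref{l:good-bad}. The first observation is that \emph{good is upward hereditary}: if $Q$ is good and $Q\subseteq R\subseteq Q_0$, then $R$ is good, since any bad ancestor of $R$ would equally witness that $Q$ is bad. Since $\varepsilon<1/\eta$, the root $Q_0$ is itself good, so the good cubes form a subtree of $\{Q\in\Delta:Q\subseteq Q_0\}$ rooted at $Q_0$. For each good $Q\neq Q_0$, let $Q'$ denote its (necessarily good) dyadic parent and let $\sigma_Q$ be the closed segment from the center $c_Q$ to $c_{Q'}$; a direct computation yields the sharp identity $|\sigma_Q|=\diam Q/2$. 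Setting
\[
T:=\{c_{Q_0}\}\cup\bigcup_{Q\text{ good},\,Q\neq Q_0}\sigma_Q\quad\text{and}\quad \Gamma:=\overline T,
\]
$T$ is a connected tree rooted at $c_{Q_0}$, so $\Gamma$ is compact and connected, and Lemma~\ref{l:good-bad}(3) gives $\Haus^1(T)\leq \tfrac12\sum_{Q\text{ good}}\diam Q<N/(2\varepsilon)$.

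Next I would show simultaneously that $B\subseteq\Gamma$ and $\Haus^1(B)=0$. Because $\Delta$ consists of half-open cubes, each $x\in B$ sits in a unique descending chain $Q_0\supset Q_1\supset Q_2\supset\cdots$ of dyadic cubes, and every $Q_k$ must be good (otherwise $x$ would lie in a bad cube and be excluded from $B$). The bound $|x-c_{Q_k}|\leq \diam Q_k/2\to 0$ gives $x\in\overline T=\Gamma$ and places $B$ inside $\bigcap_k V_k$, where $V_k$ denotes the union of the good cubes at dyadic level $k$ below $Q_0$. Letting $\nu_k$ count the good cubes at level $k$, each of the same diameter $d_k$, Lemma~\ref{l:good-bad}(3) bounds $\sum_k \nu_k d_k<N/\varepsilon$; since $d_k$ halves at each level, $\nu_k d_k\to 0$. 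Covering $V_k$ by the closures of its $\nu_k$ constituent cubes at scale $d_k$ yields $\Haus^1\bigl(\bigcap_k \overline{V_k}\bigr)=0$, and in particular $\Haus^1(B)=0$.

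The same cover argument controls $\Gamma\setminus T$: any $y\in\overline T\setminus T$ must be a limit of points $y_n\in\sigma_{Q_n}$ with $\diam Q_n\to 0$ (otherwise only finitely many segments contribute and the limit already lies in $T$), and each $y_n$ sits in the closed parent cube of $Q_n$, so $y\in \bigcap_k \overline{V_k}$, which is $\Haus^1$-null. Therefore $\Haus^1(\Gamma)=\Haus^1(T)<N/(2\varepsilon)$. Finally, since $\Gamma$ is a compact connected subset of $\RR^n$ with finite $\Haus^1$-measure, a classical theorem supplies a Lipschitz surjection $[0,1]\to\Gamma$, so $\Gamma$ is a rectifiable curve in the sense used throughout the paper.

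The step I expect to be most delicate is obtaining the \emph{strict} bound $\Haus^1(\Gamma)<N/(2\varepsilon)$: passing from $T$ to $\overline T$ is exactly the step at which spurious $\Haus^1$-mass could appear, and the factor $1/2$ offers no slack. What keeps the estimate sharp is the combination of the identity $|c_Q-c_{Q'}|=\diam Q/2$ for a dyadic child--parent pair (which accounts for the $1/2$) with Lemma~\ref{l:good-bad}(3), whose summability strength is precisely what is needed to force $\nu_k d_k\to 0$ and so annihilate the $\Haus^1$-mass that the closure might otherwise contribute.
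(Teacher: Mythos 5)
Your proof is correct and follows essentially the same route as the paper: you build the same tree $T$ of center-to-parent segments over the good cubes, bound its length by $\tfrac12\sum_{\text{good}}\diam Q<N/(2\varepsilon)$ via Lemma~\ref{l:good-bad}(3), and kill $\Haus^1$ on $B$ (and on $\overline T\setminus T$) by covering with good cubes at a fixed dyadic scale and letting the scale tend to zero. The only cosmetic difference is bookkeeping: the paper estimates $\Haus^1_\delta(\overline T)$ in one pass (subdivided segments for large cubes plus cube closures for small ones, then $\delta\downarrow 0$), whereas you bound $\Haus^1(T)$ and $\Haus^1(\overline T\setminus T)$ separately and use level-$k$ slices $\overline{V_k}$ in place of the paper's sets $G_j$—both are valid and amount to the same covering argument.
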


\begin{proof}
Let $n\geq 2$ and let $\mu$ be a locally finite Borel measure on $\RR^n$. Suppose $\mu(A)>0$ for some $Q_0\in\Delta$ and $N<\infty$, where $A=\{x\in Q_0: S(\mu,x)\leq N\}$. Then $\eta:=\mu(Q_0)>0$, as well. Given any $0<\varepsilon<1/\eta$, let $B=B(\mu,Q_0,N,\varepsilon)$ denote the set from Lemma \ref{l:good-bad}. Since
$\varepsilon$ is small enough such that $\mu(B)\geq (1-\varepsilon\eta)\mu(A)>0$, the cube $Q_0$ is a good cube.
Construct a connected set $T\subset \RR^n$ by drawing a (closed) straight line segment
$\ell_Q$ from the center of each good cube $Q\subsetneq Q_0$ to the center of its parent, which is also a good cube. Let $\overline{T}$ denote the closure of $T$. For all $\delta>0$, $$\overline{T} \subseteq \bigcup_{\stackrel{\text{good }Q\subsetneq Q_0}{\diam Q >\delta}} \ell_Q \cup \bigcup_{\stackrel{\text{good }Q\subseteq Q_0}{\diam Q\leq \delta}}\overline{Q},$$ whence $$\Haus^1_\delta(\overline{T}) \leq \sum_{\stackrel{\text{good }Q\subsetneq Q_0}{\diam Q >\delta}} \diam \ell_Q + \sum_{\stackrel{\text{good }Q\subseteq Q_0}{\diam Q \leq \delta}}\diam \overline{Q}= \sum_{\stackrel{\text{good }Q\subsetneq Q_0}{\diam Q >\delta}} \frac{1}{2}\diam Q+ \sum_{\stackrel{\text{good }Q\subseteq Q_0}{\diam Q \leq\delta}} \diam Q.$$ Here we used the fact that any straight line segment $\ell$ can be subdivided into finitely many line segments $\ell'_1,\dots, \ell'_k$ such that $\diam \ell'_i\leq \delta$ for all $i$ and $\sum_{i=1}^k \diam \ell_i'=\diam \ell$.  Since $\sum_{\text{good }Q\subseteq Q_0} \diam Q < N/\varepsilon$, it follows that

\begin{equation*}\label{e:To} \Haus^1(\overline{T})=\lim_{\delta\downarrow 0} \Haus^1_\delta(\overline{T}) \leq \frac{1}{2}\sum_{\text{good }Q\subsetneq Q_0} \diam Q < \,\frac{N}{2\varepsilon}.\end{equation*}
Now,

\begin{align}
 \notag B&\subseteq Q_0 \setminus\bigcup_{\text{bad }Q\subset Q_0}Q \\
 \label{e:zero} &=\bigcup\left\{\bigcap_{i=0}^\infty Q_i : Q_0\supseteq Q_1\supseteq \cdots\text{ is a chain of good cubes, } \lim_{i\rightarrow\infty} \diam Q_i=0\right\}\\
 \label{e:in-T} &\subseteq
\left\{\lim_{i\rightarrow\infty}x_i:x_i\in \ell_{Q_i}\text{ for some good cubes }Q_0\supseteq Q_1\supseteq \cdots,\  \lim_{i\rightarrow\infty} \diam Q_i= 0\right\}.\end{align}
Thus, $B\subseteq \overline{T}$ by (\ref{e:in-T}). Moreover, refining \eqref{e:zero},
we obtain
$B \subseteq \bigcap_{j=1}^\infty G_j$, where
$$
G_j=\bigcup\left\{\bigcap_{i=j}^\infty Q'_i: Q'_j\supsetneq Q'_{j+1}\supsetneq\cdots\text{ is a chain of good cubes, } \diam Q'_j\leq 2^{-j} \right\}.$$
Since $\sum_{\text{good }Q\subseteq Q_0}\diam Q<\infty$, we have
$\Haus^1_{2^{-j}} (G_j)\to 0$, which implies $\Haus^1(B)=0$.
Finally, because $\overline{T}$ is a continuum in $\RR^n$ with $\Haus^1(\overline{T})<\infty$, $\overline{T}$ coincides with the image $\Gamma=f([0,1])$ of some Lipschitz map $f:[0,1]\rightarrow \RR^n$;
e.g.~see \cite[Theorem I.1.8]{DS93} or \cite[Lemma 3.7]{Schul-Hilbert}.
 \end{proof}

The proof of Theorem \ref{t:fast} uses Lemmas \ref{l:good-bad} and \ref{l:tree} repeatedly over a suitable, countable choice of parameters.

\begin{proof}[Proof of Theorem \ref{t:fast}] Suppose $n\geq 2$ and let $\mu$ be a locally finite Borel measure on $\RR^n$. Our goal is to show that $\mu \res\left\{x\in\RR^n: S(\mu,x)<\infty\right\}$ is $1$-rectifiable. It suffices to prove that $\mu\res\{x\in Q_0: S(\mu,x)\leq N\}$ is $1$-rectifiable for all $Q_0\in\Delta$ and for all integers $N\geq 1$.

Fix $Q_0\in\Delta$ and $N\geq 1$. Let $A=\{x\in Q_0: S(\mu,x)\leq N\}$. If $\mu(A)=0$, then there is nothing to prove. Thus, assume $\mu(A)>0$. Then $\eta=\mu(Q_0)>0$, as well. Pick any sequence $(\varepsilon_i)_{i=1}^\infty$ such that $0<\varepsilon_i<1/\eta$ for all $i\geq 1$ and $\varepsilon_i\rightarrow 0$ as $i\rightarrow\infty$. By Lemmas \ref{l:good-bad} and \ref{l:tree}, there exist a Borel set $B_i=B(\mu,Q_0,N,\varepsilon_i)\subseteq A$ and a rectifiable curve $\Gamma_i\supseteq B_i$ such that $\Haus^1(B_i)=0$ and $\mu(A\setminus B_i) \leq \varepsilon_i\eta\mu(A)$. Hence $$\mu\left(A\setminus\bigcup_{i=1}^\infty\Gamma_i\right) \leq \mu\left(A\setminus \bigcup_{i=1}^\infty B_i\right)
\leq \inf_{j\geq 1} \mu(A\setminus B_j) \leq \eta\mu(A)\,\inf_{j\geq 1} \varepsilon_j=0.$$ Therefore, $\mu\res A$ is $1$-rectifiable, and moreover, $\mu\res A\left(\RR^n\setminus \bigcup_{i=1}^\infty B_i\right)=0$. \end{proof}

\bibliography{sufficient}{}

\providecommand{\bysame}{\leavevmode\hbox to3em{\hrulefill}\thinspace}
\providecommand{\MR}{\relax\ifhmode\unskip\space\fi MR }
\providecommand{\MRhref}[2]{%
  \href{http://www.ams.org/mathscinet-getitem?mr=#1}{#2}
}
\providecommand{\href}[2]{#2}
\begin{thebibliography}{CGLT14}

\bibitem[ADT14]{ADT2}
Jonas Azzam, Guy David, and Tatiana Toro, \emph{Wasserstein distance and
  rectifiability of doubling measures: part {II}}, preprint,
  \textsf{arXiv:1411.2512}, 2014.

\bibitem[ADT15]{ADT1}
\bysame, \emph{Wasserstein distance and rectifiability of doubling measures:
  part {I}}, Math. Ann. (2015), 74 pages, doi:10.1007/s00208-014-1206-z.

\bibitem[AM15]{AM15}
Jonas Azzam and Mihalis Mourgoglou, \emph{A characterization of 1-rectifiable
  doubling measures with connected supports}, preprint,
  \textsf{arXiv:1501.02220}, to appear in Ann. PDE, 2015.

\bibitem[AT15]{AT}
Jonas Azzam and Xavier Tolsa, \emph{Characterization of $n$-rectifiability in
  terms of {J}ones' square function: part {II}}, preprint,
  \textsf{arXiv:1501.01572}, to appear in Geom. Func. Anal., 2015.

\bibitem[BL14]{Bate-Li}
David Bate and Sean Li, \emph{Characterizations of rectifiable metric measure
  spaces}, preprint, \textsf{arXiv:1409.4242}, 2014.

\bibitem[BS15]{BS}
Matthew Badger and Raanan Schul, \emph{Multiscale analysis of 1-rectifiable
  measures: necessary conditions}, Math. Ann. \textbf{361} (2015), no.~3-4,
  1055--1072. \MR{3319560}

\bibitem[Bue14]{Buet}
Blanche Buet, \emph{Quantitative conditions for rectifiability for varifolds},
  preprint, \textsf{arXiv:1409.4749}, to appear in Ann. Inst. Fourier
  (Grenoble), 2014.

\bibitem[CGLT14]{CGLT}
Vasilis Chousionis, John Garnett, Triet Le, and Xavier Tolsa, \emph{Square
  functions and uniform rectifiability}, preprint, \textsf{arXiv:1401.3382}, to
  appear in Tran. Amer. Math. Soc., 2014.

\bibitem[DS91]{DS91}
G.~David and S.~Semmes, \emph{Singular integrals and rectifiable sets in {${\bf
  R}\sp n$}: {B}eyond {L}ipschitz graphs}, Ast\'erisque (1991), no.~193, 152.
  \MR{1113517 (92j:42016)}

\bibitem[DS93]{DS93}
Guy David and Stephen Semmes, \emph{Analysis of and on uniformly rectifiable
  sets}, Mathematical Surveys and Monographs, vol.~38, American Mathematical
  Society, Providence, RI, 1993. \MR{1251061 (94i:28003)}

\bibitem[Fed69]{Federer}
Herbert Federer, \emph{Geometric measure theory}, Die Grundlehren der
  mathematischen Wissenschaften, Band 153, Springer-Verlag New York Inc., New
  York, 1969. \MR{0257325 (41 \#1976)}

\bibitem[GKS10]{GKS}
John Garnett, Rowan Killip, and Raanan Schul, \emph{A doubling measure on
  {$\Bbb R\sp d$} can charge a rectifiable curve}, Proc. Amer. Math. Soc.
  \textbf{138} (2010), no.~5, 1673--1679. \MR{2587452 (2011a:28018)}

\bibitem[Jon90]{Jones-TST}
Peter~W. Jones, \emph{Rectifiable sets and the traveling salesman problem},
  Invent. Math. \textbf{102} (1990), no.~1, 1--15. \MR{1069238 (91i:26016)}

\bibitem[L{\'e}g99]{Leger}
J.~C. L{\'e}ger, \emph{Menger curvature and rectifiability}, Ann. of Math. (2)
  \textbf{149} (1999), no.~3, 831--869. \MR{1709304 (2001c:49069)}

\bibitem[Ler03]{Lerman}
Gilad Lerman, \emph{Quantifying curvelike structures of measures by using
  {$L\sb 2$} {J}ones quantities}, Comm. Pure Appl. Math. \textbf{56} (2003),
  no.~9, 1294--1365. \MR{1980856 (2004c:42035)}

\bibitem[Mat75]{Mattila75}
Pertti Mattila, \emph{Hausdorff {$m$} regular and rectifiable sets in
  {$n$}-space}, Trans. Amer. Math. Soc. \textbf{205} (1975), 263--274.
  \MR{0357741 (50 \#10209)}

\bibitem[Mat95]{Mattila}
\bysame, \emph{Geometry of sets and measures in {E}uclidean spaces}, Cambridge
  Studies in Advanced Mathematics, vol.~44, Cambridge University Press,
  Cambridge, 1995, Fractals and rectifiability. \MR{1333890 (96h:28006)}

\bibitem[Paj96]{Pajot96}
Herv{\'e} Pajot, \emph{Sous-ensembles de courbes {A}hlfors-r\'eguli\`eres et
  nombres de {J}ones}, Publ. Mat. \textbf{40} (1996), no.~2, 497--526.
  \MR{1425633 (98c:28004)}

\bibitem[Paj97]{Pajot97}
\bysame, \emph{Conditions quantitatives de rectifiabilit\'e}, Bull. Soc. Math.
  France \textbf{125} (1997), no.~1, 15--53. \MR{1459297 (98m:28014)}

\bibitem[Pre87]{Preiss}
David Preiss, \emph{Geometry of measures in {${\bf R}\sp n$}: distribution,
  rectifiability, and densities}, Ann. of Math. (2) \textbf{125} (1987), no.~3,
  537--643. \MR{890162 (88d:28008)}

\bibitem[Sch07]{Schul-Hilbert}
Raanan Schul, \emph{Subsets of rectifiable curves in {H}ilbert space---the
  analyst's {TSP}}, J. Anal. Math. \textbf{103} (2007), 331--375. \MR{2373273
  (2008m:49205)}

\bibitem[Tol12]{Tolsa12}
Xavier Tolsa, \emph{Mass transport and uniform rectifiability}, Geom. Funct.
  Anal. \textbf{22} (2012), no.~2, 478--527. \MR{2929071}

\bibitem[Tol14]{Tolsa-square}
\bysame, \emph{Rectifiable measures, square functions involving densities, and
  the {C}auchy transform}, preprint, \textsf{arXiv:1408.6979}, 2014.

\bibitem[Tol15]{Tolsa-n}
\bysame, \emph{Characterization of $n$-rectifiability in terms of {J}ones'
  square function: part {I}}, preprint, \textsf{arXiv:1501.01569}, 2015.

\bibitem[TT14]{TT}
Xavier Tolsa and Tatiana Toro, \emph{Rectifiability via a square function and
  {P}reiss' theorem}, Int. Math. Res. Not. IMRN (2014), 25 pages,
  doi:10.1093/imrn/rnu082.

\end{thebibliography}
\bibliographystyle{amsalpha}

\end{document}